\renewcommand{\subjclass}[1]
{\thanks{\emph{2000 Mathematics Subject Classification:}~#1}}
\newtheorem{theorem}{Theorem}[section]
\newtheorem{lemma}{Lemma}[section]
\newtheorem{prp}{Proposition}[section]
\newcommand{\K}{\mathbb{K}}
\newcommand{\C}{\mathbb{C}}
\newcommand{\Q}{\mathbb{Q}}
\newcommand{\Z}{\mathbb{Z}}
\newcommand{\R}{\mathbb{R}}
\newcommand{\w}{\mathbf{w}}
\newcommand{\x}{\mathbf{x}}
\newcommand{\ee}{\mathbf{e}}
\begin{document}

\title{Optimal systems of fundamental $S$-units for LLL-reduction}

\subjclass{11R27, 11D61, 11Y50} \keywords{Fundamental system of
$S$-units, $S$-unit equations, LLL-reduction}

\author{L. Hajdu}
\thanks{Research supported in part by the Hungarian Academy
of Sciences and by the OTKA grants T48791 and K67580.}
\address{L. Hajdu \newline
         \indent Institute of Mathematics, University of Debrecen
         \newline
         \indent Number Theory Research Group, Hungarian Academy
         of Sciences
         \newline
         \indent and University of Debrecen \newline
         \indent H-4010 Debrecen, P.O. Box 12, Hungary}
\email{hajdul\char'100math.klte.hu}

\begin{abstract}
We show that a particular parameter plays a vital role in the
resolution of $S$-unit equations, at the stage where LLL-reduction
is applied. We define the notion of optimal system of fundamental
$S$-units (with respect to this parameter), and prove that such a
system exists and can be effectively constructed. Applying our
results and methods, one can obtain much better bounds for the
solutions of $S$-unit equations after the reduction step, than
earlier. We briefly also discuss some effects of our results on
the method of Wildanger and Smart for the resolution of
$S$-unit equations.
\end{abstract}

\maketitle

\section{Introduction}

$S$-unit equations play a central role in the theory of
Diophantine equations. On the one hand, there are many Diophantine
problems which naturally give rise to $S$-unit equations. On the
other hand, several types of classical Diophantine equations (such
as e.g. norm form equations, discriminant form equations, index
form equations) can be reduced to such equations. Here we only
refer to the papers, survey papers and books \cite{Gy1},
\cite{ShTi}, \cite{EGyST}, \cite{EGy}, \cite{Gy2}, \cite{Sp},
\cite{G} and the references given there.

It is well-known (see e.g. \cite{ShTi}, \cite{EGyST}, \cite{Gy2},
\cite{ESS} and the references there) that under general conditions
the number of solutions of $S$-unit equations is finite. However,
if the number of variables is more than two, then no bound is
known for the solutions themselves. Moreover, a conjecture of
Cerlienco, Mignotte and Piras \cite{CMP} states that such
equations are algorithmically unsolvable. In case of $S$-unit
equations in two variables, by the help of Baker's method the
solutions can also be bounded (see e.g. \cite{Gy0}, \cite{ShTi},
\cite{EGyST}, \cite{Gy2}, \cite{Gy3}, \cite{GyY} and the
references given there). Further, based on LLL-type results of
de Weger (cf. \cite{deW}) and an enumeration method of Wildanger
\cite{W}, there exists an efficient algorithm for determining all
solutions of any particular equations in this case; see
\cite{Sm2}.

The practical solution of $S$-unit equations in two unknowns (and
also of some other types, e.g. of index form equations) consists
of three main steps (see e.g. \cite{P1}, \cite{GPP1}, \cite{GPP2},
\cite{W}, \cite{Sm2}, \cite{GGy}, \cite{BGGy}, and the references
given there). First, applying Baker's method an initial upper
bound $C_{ini}$ is obtained for the unknowns (which are in the
exponents). For the best known bounds see \cite{M} in the complex
and \cite{Y} in the $p$-adic case, respectively, and also the
references given there. Roughly speaking, at this principal stage
the "large" solutions are excluded. Though from the theoretical
point of view this first step is the deepest and most important
one, for practical purposes (i.e. for listing explicitly all
solutions) the Baker-type bound is (typically) too large. Hence
before going for the solutions, some further reduction is needed.
In the second step applying some variants of the LLL-algorithm,
one can reduce the bound $C_{ini}$ considerably, to get a much
smaller bound $C_{red}$ for the solutions (see e.g. \cite{Sm2} and
the references given there). This stage is frequently referred to
as getting rid of the "medium" size solutions. However, if the
number of variables (the rank of the $S$-unit group) is "large",
even this reduced bound can be too high for getting all solutions,
at least if one would like to simply apply some primitive listing
algorithm. So to find the solutions effectively, some clever
enumeration is needed. Such an algorithm has been worked out by
Wildanger \cite{W} in the complex case, and extended and adopted
by Smart \cite{Sm2} for the $S$-unit case. The application of
these algorithms as a third step (ideally) leads to the explicit
solution of the original $S$-unit equation.

In this paper we focus on the second stage of this procedure. It
turns out (in fact it is widely known already) that in pushing
down the reduced bound $C_{red}$ as much as possible, a certain
parameter plays an important role. The aim of the present paper is
to show that there is an optimal choice for this crucial
parameter, and to give a method to actually find it. Hence
ultimately we are able to get much better bounds $C_{red}$ in case
of particular equations than earlier. Having established our
results, we briefly discuss their possible effects on the method
of Wildanger \cite{W} and Smart \cite{Sm2} for the practical
resolution of $S$-unit equations. We note that the results
presented in the paper lead to certain improvements of the method.

To formulate our results clearly, we need some preparation. For this
purpose we introduce some (standard) notation.

\subsection{Valuations}
\label{i1}

Let $\K$ be an algebraic number field of degree $n$, and let
$M_\K$ be the set of places on $\K$. Choose from every place $v\in
M_\K$ a valuation $|.|_v$ in the following way. If $v$ is infinite
and corresponds to an embedding $\sigma:\K\to\C$ then for every
$\alpha\in\K$ let
$$
|\alpha|_v=
\begin{cases}
|\sigma(\alpha)|, \mbox{ if } \sigma \mbox{ is real},\\
|\sigma(\alpha)|^2, \mbox{ if } \sigma \mbox{ is complex}.
\end{cases}
$$
Further, if $v$ is finite and corresponds to a prime ideal $P$ of
$\K$, then for every $\alpha\in\K$ put
$$
|\alpha|_v=
\begin{cases}
0, \mbox{ if } \alpha=0,\\
\mbox{Norm}(P)^{-\mbox{\scriptsize ord}_P(\alpha)}, \mbox{
otherwise}.
\end{cases}
$$
By these choices we have the product formula, that is for every
$\alpha\in\K$, $\alpha\neq 0$
\begin{equation}
\label{pf}
\prod\limits_{v\in M_\K} |\alpha|_v = 1
\end{equation}
holds. Let $S=\{v_1,\dots,v_s\}$ be a finite subset of $M_\K$,
containing all the infinite places, and write
$$
U_S=\{\varepsilon : |\varepsilon|_v=1 \mbox{ for all } v\in
M_\K\setminus S\}
$$
for the set of $S$-units. As is well-known, $U_S$ is a finitely
generated group of rank $s-1$, containing the unit group of the
ring of integers of $\K$.

\subsection{$S$-unit equations}
\label{i2}

Let $\alpha_1,\alpha_2$ be fixed non-zero elements of $\K$.
Consider the so-called $S$-unit equation
\begin{equation}
\label{sueq}
\alpha_1 x_1 + \alpha_2 x_2 = 1
\end{equation}
in two unknowns $x_1,x_2\in U_S$. It is well-known (see e.g.
\cite{EGyST}) that if $x_1,x_2$ is a solution to equation
\eqref{sueq} then we have
\begin{equation}
\label{xiexpand}
x_i = \varepsilon_0^{b_{i,0}}\varepsilon_1^{b_{i,1}}\dots
\varepsilon_{s-1}^{b_{i,s-1}} \mbox{   for } i=1,2.
\end{equation}
Here $\varepsilon_1,\dots,\varepsilon_{s-1}$ is a fundamental
system of $S$-units, and $\varepsilon_0$ is a root of unity in
$\K$. Put $B_i=\max\limits_{1\leq j\leq s-1} |b_{i,j}|$ for
$i=1,2$ and $B=\max(B_1,B_2)$. Clearly, without loss of generality
we may assume that $B=B_1$. From \eqref{xiexpand}, for all $v\in
M_\K$
$$
\log |x_1|_v=\sum\limits_{j=1}^{s-1} b_{1,j}\log |\varepsilon_j|_v
$$
holds. In particular, we have
\begin{equation}
\label{matrixeq}
\begin{pmatrix}
\log |\varepsilon_1|_{v_1} & \dots & \log
|\varepsilon_{s-1}|_{v_1}\\
\vdots & \ddots & \vdots \\
\log |\varepsilon_1|_{v_s} & \dots & \log
|\varepsilon_{s-1}|_{v_s}
\end{pmatrix}
\cdot
\begin{pmatrix}
b_{1,1}\\
\vdots\\
b_{1,s-1}\\
\end{pmatrix}
=
\begin{pmatrix}
\log |x_1|_{v_1}\\
\vdots\\
\log |x_1|_{v_s}\\
\end{pmatrix}
.
\end{equation}
From the above equality \eqref{matrixeq} one easily gets that
\begin{equation}
\label{mainineq}
B_1 \leq C^* \max\limits_{1\leq j\leq s} |\log |x_1|_{v_j}|
\end{equation}
with some constant $C^*$, depending only on our fundamental system
of $S$-units. (The constant $C^*$ is the crucial object from the
point of view of the present paper; we shall come back to this
point a bit later.) Then, by the help of the product formula
\eqref{pf}, using standard arguments (see e.g. \cite{EGyST},
\cite{Gy3}) we immediately get that
\begin{equation}
\label{x1vfelso}
|x_1|_{v_t} \leq \exp\left(\frac{-B_1}{(s-1)C^*}\right)
\end{equation}
holds for some $t\in\{1,\dots,s\}$. Hence from \eqref{sueq} we get
\begin{equation}
\label{bfelso}
|1-\alpha_2 x_2|_{v_t} \leq |\alpha_1|_{v_t}
\exp\left(\frac{-B_1}{(s-1)C^*}\right).
\end{equation}
Now using a Baker-type result (e.g. Matveev \cite{M} if $v_t$ is
infinite and Yu \cite{Y} if $v_t$ is finite, respectively) we get
something like
\begin{equation}
\label{balso}
|1-\alpha_2 x_2|_{v_t} > \exp(-C_0\log(B_2)).
\end{equation}
Here $C_0$ is some constant depending only on
$\alpha_1,\alpha_2,\K,S$. The inequalities \eqref{bfelso} and
\eqref{balso} by our assumption $B=B_1$ yield an initial upper
bound $C_{ini}$ for $B$.

\subsection{The importance of $C^*$}
\label{i3}

To get the reduced bound $C_{red}$ by the LLL-algorithm, one
starts from inequality \eqref{bfelso} (or a variant of it)
together with the already known information $B<C_{ini}$.
Interestingly, the finally obtained reduced bound $C_{red}$
depends very heavily on the constant $C^*$; the dependence is
closely linear. This phenomenon should hopefully be
(heuristically) clear from the examples presented in this paper,
but already is well-known for experts for a long time; see e.g.
the remarks of Tzanakis and de Weger \cite{TW} pp. 239-240, and
Smart \cite{Sm1} p. 823. Hence it seems to be worth to try to keep
$C^*$ as small as possible. Apparently, so far this point remained
more or less untouched, and the calculation of $C^*$ is usually
done in a rather casual way. Namely, in all occurrences in the
literature the standard choice is to take something like
\begin{equation}
\label{oldc*}
C^*:= \min\limits_{1\leq j\leq s} ||R_j^{-1}||
\end{equation}
(with certain refinements at some places). Here $R_j$ is the
(invertible) matrix obtained by deleting the $j$-th row of the
matrix at the left hand side of \eqref{matrixeq}, and $||A||$
stands for the row norm of a $k_1\times k_2$ type real matrix
$A=(a_{ij})_{\underset{1\leq j\leq k_2}{1\leq i\leq k_1}}$, i.e.
$||A||=\max\limits_{1\leq i\leq k_1}\sum\limits_{j=1}^{k_2}
|a_{ij}|$. Using e.g. Cramer's rule, one can easily see that this
choice of $C^*$ is appropriate to have \eqref{mainineq}.

However, as it turns out, this choice of $C^*$ can be rather far
from being optimal, which results in a much worse value for
$C_{red}$ than possible. In the second section we show a very
simple way to get an instant improvement upon the above choice.
Further, we show that in fact the best $C^*$ value exists, and
depends only on the choice of the fundamental system of $S$-units.
Finally, we prove that one can explicitly determine an optimal
fundamental system of $S$-units (yielding the best value for
$C^*$), and we give a (relatively) efficient algorithm for finding
such a system. As the steps of our arguments and methods are
connected in a rather organic way, we do not start with listing
the main theorems, we prefer to formulate our results in a
"linear" way. However, in order not to break the presentation, we
give the proofs in a later section. In Section \ref{opsys} we give
an algorithm which finds an optimal system of fundamental
$S$-units for LLL-reduction. In the fourth section of the paper we
provide some numerical examples, including the bounds $C_{red}$
obtained by the old method and by the new one. In the fifth
section we give the proofs of our results. The sixth section is
devoted to a brief discussion about the effects of our results on
the method of Wildanger \cite{W} and Smart \cite{Sm2} for the
practical solution of $S$-unit equations. Finally, in the
Appendix on the one hand we outline the reduction methods, and on
the other hand we indicate how one can adjust the method developed
in the paper if the valuations are not chosen in the "standard"
way.

Note that our method can be adopted to the case where in
\eqref{sueq} not a full system of fundamental $S$-units are
involved, or the $S$-units form only an independent system.

Finally, we mention that a similar type investigation has been
performed about Mordell-Weil bases of elliptic curves by Stroeker
and Tzanakis \cite{StTz}, to reduce the final bound for the
integral solutions of elliptic equations.

\section{Optimizing $C^*$}

We keep our notation from the previous section. Further, let
$F=(\varepsilon_1,\dots,\varepsilon_{s-1})$ be a system of
fundamental $S$-units of $\K$, and define the $s\times (s-1)$
matrix $R_F$ by
$$
R_F:=
\begin{pmatrix}
\log |\varepsilon_1|_{v_1} & \dots & \log
|\varepsilon_{s-1}|_{v_1}\\
\vdots & \ddots & \vdots \\
\log |\varepsilon_1|_{v_s} & \dots & \log
|\varepsilon_{s-1}|_{v_s}
\end{pmatrix}
.
$$
Let $R_F'$ be any $(s-1)\times s$ matrix such that
\begin{equation}
\label{rf'}
R_F'\cdot R_F=E_{(s-1)\times (s-1)},
\end{equation}
with the identity matrix of size $(s-1)\times (s-1)$ on the right
hand side. The importance of the matrices $R_F'$ becomes clear in
view of the following simple observation: starting from
\eqref{matrixeq}, by \eqref{rf'} we get
$$
\begin{pmatrix}
b_{1,1}\\
\vdots\\
b_{1,s-1}\\
\end{pmatrix}
=
R_F'
\cdot
\begin{pmatrix}
\log |x_1|_{v_1}\\
\vdots\\
\log |x_1|_{v_s}\\
\end{pmatrix}
.
$$
Hence in \eqref{mainineq} we can take $C^*$ to be $||R_F'||$, with
any matrix $R_F'$. So we define the norm $N(F)$ of the system $F$
by
$$
N(F):=\min\limits_{R_F'} ||R_F'||,
$$
where $R_F'$ runs through the matrices for which \eqref{rf'} is
valid. (We shall see later that the minimum does exist.) The
system $F$ is called optimal for LLL-reduction, if $N(F)$ is
minimal among all choices of fundamental systems of $S$-units. As
we shall also see, the minimum of $N(F)$ also exists. Hence we put
\begin{equation}
\label{newc*}
C^*:= \min\limits_F N(F),
\end{equation}
where $F$ runs through all systems of fundamental $S$-units. Then
we have \eqref{mainineq}, of course using the optimal system $F$
in \eqref{xiexpand}. In order to compare this choice of $C^*$ with
the earlier one, we write $N_{old}(F)$ for the choice of $C^*$ in
\eqref{oldc*}. Note that \eqref{oldc*} just means that we
unnecessarily restrict ourselves to matrices $R_F'$ having a
constant zero column.

In the following proposition we describe the structure of the
matrices $R_F'$, for fixed $F$.

\begin{prp}
\label{p1} Let $R_F'$ be a matrix for which \eqref{rf'} is valid.
Then for each $i\in\{1,\dots,s-1\}$ there exists a $u_i\in\R$
such that the $i$-th row of $R_F'$ is of the form $\w_i-u_i\cdot
{\mathbf 1}$ with $\w_i=(w_{i,1},\dots,w_{i,s-1},0)$, where
${\mathbf 1}$ is the $s$-tuple with all entries equal to $1$, and
$$
\begin{pmatrix}
w_{1,1}&\dots&w_{1,s-1}\\
\vdots&\ddots&\vdots\\
w_{s-1,1}&\dots&w_{s-1,s-1}
\end{pmatrix}
=
\begin{pmatrix}
\log |\varepsilon_1|_{v_1} & \dots & \log
|\varepsilon_{s-1}|_{v_1}\\
\vdots & \ddots & \vdots \\
\log |\varepsilon_1|_{v_{s-1}} & \dots & \log
|\varepsilon_{s-1}|_{v_{s-1}}
\end{pmatrix}
^{-1}.
$$
\end{prp}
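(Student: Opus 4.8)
The plan is to exploit the block structure of $R_F$. Write $R_F = \begin{pmatrix} A \\ r \end{pmatrix}$, where $A$ is the top $(s-1)\times(s-1)$ block and $r=(\log|\varepsilon_1|_{v_s},\dots,\log|\varepsilon_{s-1}|_{v_s})$ is the last row; the matrix on the right-hand side of the proposition is precisely $A^{-1}$. First I would check that $A$ is indeed invertible: if it were singular, some nonzero row vector would annihilate $A$, and using the product formula \eqref{pf} (which says the columns of $R_F$ sum to zero, since $\sum_v \log|\varepsilon_j|_v = 0$) one sees that the last row $r$ is the negative of the sum of the rows of $A$; combined with the rank-$(s-1)$ property of $R_F$ this forces $A$ to have full rank. (Alternatively, this invertibility is exactly what makes the classical choice \eqref{oldc*} legitimate, with $j=s$, so it may simply be cited.)

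Next, partition an arbitrary candidate $R_F'$ conformally as $R_F' = (W \mid c)$, where $W$ is $(s-1)\times(s-1)$ and $c$ is the last column, an $(s-1)$-vector. The condition \eqref{rf'} becomes $W A + c\, r = E_{(s-1)\times(s-1)}$, i.e. $W = (E - c\, r)A^{-1} = A^{-1} - c\,(r A^{-1})$. Now I use the product-formula observation again in the form $r = -{\mathbf 1}_{s-1}^{\top} A$ (the last row is minus the sum of the first $s-1$ rows), where ${\mathbf 1}_{s-1}$ is the all-ones vector of length $s-1$; hence $r A^{-1} = -{\mathbf 1}_{s-1}^{\top}$. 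Substituting, $W = A^{-1} + c\,{\mathbf 1}_{s-1}^{\top}$, which says exactly that the $i$-th row of $W$ is $(w_{i,1},\dots,w_{i,s-1}) + c_i\,{\mathbf 1}_{s-1}^{\top}$, i.e.\ the $i$-th row of $A^{-1}$ shifted by the scalar $c_i$. Setting $u_i := -c_i$ and appending the zero last coordinate gives the $i$-th row of $R_F'$ in the claimed form $\w_i - u_i\cdot{\mathbf 1}$.

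Finally I would note that the converse direction — that any matrix of the stated form does satisfy \eqref{rf'} — is the same computation run backwards, so the stated form is a complete parametrization (with the $u_i$ as free real parameters), which is what will be needed for the later optimization arguments. The only genuinely delicate point is the use of the product formula to identify $r A^{-1}$ with $-{\mathbf 1}_{s-1}^{\top}$; everything else is bookkeeping with block matrix multiplication. I would phrase that step carefully, since it is the place where the specific normalization of the valuations $|.|_v$ (giving \eqref{pf}) actually enters, and it is what forces the last column of $W$'s "correction" to be a rank-one term $c\,{\mathbf 1}_{s-1}^{\top}$ rather than something more general.
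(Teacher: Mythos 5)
Your proof is correct and takes essentially the same approach as the paper's: the paper's own proof simply cites the rank-$(s-1)$ property of $R_F$ together with the product formula and appeals to ``the elementary theory of systems of linear equations,'' and your argument is the explicit block-matrix unwinding of exactly that appeal (partitioning $R_F = \begin{pmatrix} A \\ r \end{pmatrix}$, using $\mathbf{1}_s^{\top}R_F = 0$ to get $r = -\mathbf{1}_{s-1}^{\top}A$ and hence $rA^{-1} = -\mathbf{1}_{s-1}^{\top}$, and solving $WA + c\,r = E$). The one phrase to tighten is ``the columns of $R_F$ sum to zero,'' which should read ``each column of $R_F$ sums to zero''; as written it could be misread as the sum of the columns vanishing, which is a different (and false) statement.
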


The next result provides a simple tool to calculate $N(F)$ for a
fixed system $F$ of fundamental $S$-units. Before its formulation,
we need a new notation. Define the central norm $\vert\x\vert_C$
of $\x\in\R^n$, $\x=(x_1,\dots,x_n)$ in the following way. Let
$y_1,\dots,y_n$ be a rearrangement of $x_1,\dots,x_n$ such that
$y_1\leq\dots\leq y_n$. Then put
\begin{equation}
\label{cnorm}
\vert\x\vert_C=\sum\limits_{i=1}^n |y_l-y_i|
\end{equation}
where $l=\lfloor(n+1)/2\rfloor$. The number $y_l$ is called the
center of $\x$.

\begin{prp}
\label{p2}
Using the previous notation, we have
$$
N(F)=\max\limits_{1\leq i\leq s-1} |\w_i|_C.
$$
\end{prp}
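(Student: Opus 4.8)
The plan is to reduce the claim, via Proposition~\ref{p1}, to the classical fact that the (lower) median of a finite multiset of reals minimizes the sum of absolute deviations from it.

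First I would invoke Proposition~\ref{p1} to parametrize the admissible matrices: every $R_F'$ satisfying \eqref{rf'} is obtained by picking, independently for each $i\in\{1,\dots,s-1\}$, a real number $u_i$ and taking the $i$-th row to be $\w_i-u_i\cdot{\mathbf 1}$, where the $s$-tuples $\w_i=(w_{i,1},\dots,w_{i,s-1},0)$ depend only on $F$. Writing $w_{i,s}=0$, this gives
$$
\|R_F'\|=\max_{1\le i\le s-1}\sum_{j=1}^{s}|w_{i,j}-u_i|.
$$
Since the parameters $u_1,\dots,u_{s-1}$ range over $\R$ independently of one another, minimizing the outer maximum amounts to minimizing each inner sum separately; choosing for every $i$ a minimizer $u_i^{\ast}$ of $u\mapsto\sum_{j=1}^s|w_{i,j}-u|$, one obtains
$$
N(F)=\min_{u_1,\dots,u_{s-1}}\ \max_{1\le i\le s-1}\sum_{j=1}^s|w_{i,j}-u_i|
=\max_{1\le i\le s-1}\ \min_{u\in\R}\sum_{j=1}^s|w_{i,j}-u|.
$$
The existence of these minima (and hence of $N(F)$, as the maximum of finitely many of them) also falls out here, since each map $u\mapsto\sum_j|w_{i,j}-u|$ is continuous and tends to $+\infty$ as $|u|\to\infty$.

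The second, and really the only computational, step is to show that for any $\x=(x_1,\dots,x_n)\in\R^n$ one has $\min_{u\in\R}\sum_{j=1}^n|x_j-u|=\vert\x\vert_C$. The function $g(u)=\sum_{j=1}^n|x_j-u|$ is convex and piecewise linear, with slope $\#\{j:x_j<u\}-\#\{j:x_j>u\}$ at every point where $u\neq x_j$ for all $j$. A short count shows that this slope is negative for $u<y_l$ and nonnegative for $u>y_l$, where $y_1\le\cdots\le y_n$ is the sorted rearrangement of the $x_j$ and $l=\lfloor(n+1)/2\rfloor$; hence $y_l$ is a global minimizer of $g$, and $g(y_l)=\sum_{i=1}^n|y_l-y_i|=\vert\x\vert_C$. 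Applying this with $n=s$ and $\x=\w_i$ for each $i$, and substituting into the displayed formula for $N(F)$, yields $N(F)=\max_{1\le i\le s-1}\vert\w_i\vert_C$, which is the assertion.

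I do not anticipate a genuine obstacle: once Proposition~\ref{p1} is in hand the statement is essentially the ``median minimizes $\ell^1$-deviation'' lemma, together with the elementary observation that the rows of $R_F'$ can be optimized one at a time. The only points that require a little care are the slope bookkeeping when there are repeated values among the $x_j$, and checking that the specific index $l=\lfloor(n+1)/2\rfloor$ — rather than the upper median when $n$ is even — always lies in the minimizing set; both of these are routine.
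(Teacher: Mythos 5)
Your proof is correct and follows the same route as the paper's: use Proposition~\ref{p1} to parametrize $R_F'$ by the independent shifts $u_1,\dots,u_{s-1}$, note that the row optimizations therefore decouple, and then invoke the fact that the (lower) median minimizes the $\ell^1$-deviation, whose minimal value is exactly $\vert\w_i\vert_C$. The paper simply asserts that $u_i=z_l$ with $l=\lfloor(s+1)/2\rfloor$ is a minimizer, whereas you supply the convexity/slope bookkeeping, but the argument is the same.
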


Note that the above result already gives a tool to improve upon
the earlier choice of $C^*$ in \eqref{oldc*}. Indeed, if one does
not interested in finding the optimal system of fundamental
$S$-units for LLL-reduction, but insists on his favorite system
$F$, taking $C^*$ to be $N(F)$ rather than $N_{old}(F)$ in
\eqref{mainineq}, is already an improvement. By Proposition
\ref{p2} the calculation of $N(F)$ takes only a fraction of time.

The next result (together with its proof) shows that there exists
an optimal system of fundamental $S$-units indeed, and such a
system can be effectively determined.

\begin{theorem}
\label{t1} For any positive real constant $c$ there are only
finitely many systems $F$ of fundamental $S$-units such that
$N(F)\leq c$. Further, all such systems can be effectively
determined.
\end{theorem}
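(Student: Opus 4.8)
The plan is to relate the quantity $N(F)$ to a lattice-theoretic measure of the system $F$, and then to invoke finiteness of lattice points of bounded size together with the classical fact that two fundamental systems of $S$-units differ by a matrix in $GL_{s-1}(\Z)$. Recall that by Proposition \ref{p2}, $N(F)=\max_{1\leq i\leq s-1}|\w_i|_C$, where the $\w_i$ are (the nonzero parts of) the rows of the inverse of the top $(s-1)\times(s-1)$ block of $R_F$. Equivalently, if we write $L_F$ for the $(s-1)\times(s-1)$ matrix $(\log|\varepsilon_j|_{v_i})_{1\leq i,j\leq s-1}$, then $N(F)$ is a norm-like function of the rows of $L_F^{-1}$. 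The key point is that $N(F)$ is, up to fixed positive constants depending only on $\K$ and $S$, comparable to $\|L_F^{-1}\|$ (for any fixed matrix norm): this is immediate from \eqref{cnorm} since $|\x|_C$ is bounded above by $n\max_i|x_i|$ and below by $\max_i|x_i|$ minus $\min_i|x_i|$, and one can always translate by a multiple of ${\mathbf 1}$ — but here the last coordinate of $\w_i$ is pinned to $0$, so $|\w_i|_C\geq \frac12\|\w_i\|_{\infty}$, giving a genuine two-sided bound.

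First I would fix a reference fundamental system $F_0=(\eta_1,\dots,\eta_{s-1})$ and recall that any other fundamental system $F=(\varepsilon_1,\dots,\varepsilon_{s-1})$ satisfies $\varepsilon_j=\zeta_j\prod_k \eta_k^{m_{kj}}$ for a root of unity $\zeta_j$ and a matrix $M=(m_{kj})\in GL_{s-1}(\Z)$; since roots of unity have all valuations equal to $1$, we get $L_F=L_{F_0}M$ at the level of log-valuation matrices (here I use that deleting the place $v_s$ still leaves an invertible $(s-1)\times(s-1)$ block — this holds because the rank of the full $s\times(s-1)$ matrix $R_{F_0}$ is $s-1$, and by the product formula all its maximal minors are, up to sign, equal, hence all nonzero). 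Consequently $L_F^{-1}=M^{-1}L_{F_0}^{-1}$, so the rows of $L_F^{-1}$ are $\Z$-linear combinations, with coefficients from $M^{-1}\in GL_{s-1}(\Z)$, of the (fixed) rows of $L_{F_0}^{-1}$. Therefore the condition $N(F)\leq c$ forces $\|M^{-1}L_{F_0}^{-1}\|\leq c'$ for a suitable norm and constant $c'=c'(c,\K,S)$, i.e. the integer matrix $M^{-1}$ lies in a bounded region of $\Z^{(s-1)^2}$; but a bounded region contains only finitely many integer points, and each yields finitely many systems $F$ (the $\zeta_j$ range over the finite group of roots of unity in $\K$, and $M$ over the finitely many matrices whose inverse landed in our box). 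This proves finiteness.

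For effectivity I would make the above bound completely explicit: from a given (effectively computable) fundamental system $F_0$ one computes $L_{F_0}$ and $L_{F_0}^{-1}$ numerically, extracts an explicit constant so that $N(F)\leq c$ implies each entry of $M^{-1}$ is at most some explicit $B(c)$ in absolute value, then enumerates all integer matrices $M^{-1}$ within that box, discards those that are singular or whose inverse is non-integral, and for each surviving $M$ and each tuple of roots of unity checks directly (via Proposition \ref{p2}) whether the resulting $F$ satisfies $N(F)\leq c$. This is a finite, terminating procedure. The main obstacle — and the step requiring care rather than routine estimation — is establishing the two-sided comparison between $N(F)$ and $\|L_F^{-1}\|$ with constants that depend only on $\K$ and $S$ and not on $F$: one must check that the lower bound $N(F)\gg\|L_F^{-1}\|$ does not degenerate, which is exactly where the fact that the last coordinate of each $\w_i$ is forced to be $0$ (Proposition \ref{p1}) is used, preventing the central norm from collapsing under a translation by ${\mathbf 1}$. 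Once that comparison is in hand, the finiteness and effectivity are a formal consequence of the $GL_{s-1}(\Z)$-action and the discreteness of $\Z^{(s-1)^2}$.
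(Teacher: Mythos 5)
Your argument is essentially the paper's own: fix a reference system, use the $GL_{s-1}(\Z)$-action to write the passage matrix, and exploit the fact that the last coordinate of each $\w_i$ is pinned to $0$ to convert the bound $|\w_i|_C\leq c$ into a sup-norm bound on the rows of the inverse block, hence a bounded box for the integer transition matrix. The only cosmetic differences are that you phrase the pinning as a two-sided norm comparison with a factor $\tfrac12$, whereas the paper observes directly (and slightly more sharply) that $x_n=0$ and $|\x|_C\leq c$ already force every coordinate of $\x$ to be at most $c$ in absolute value, since for the index $m$ with $y_m=0$ one has $|y_j|\leq|y_j-y_l|+|y_m-y_l|\leq|\x|_C$; and you explicitly account for the root-of-unity ambiguity, which the paper leaves implicit since roots of unity do not affect $R_F$.
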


\section{An algorithm to determine an optimal system $F$}
\label{opsys}

In this section we outline an algorithm to determine an optimal
system of fundamental $S$-units for LLL-reduction. The algorithm
consists of two parts. First, by a heuristic method we (hopefully)
obtain the best system $F$, then we check that our choice is best
possible indeed.

In the first step we start from an arbitrary system $F_0$ of
fundamental $S$-units. (Such a system can be found e.g. by Magma
\cite{BCP}, but one can also use KASH \cite{KASH} or PARI/GP
\cite{PARI}). Then we calculate the values of $w_{i,j}$ and choose
$u_i$ to be the center of $\w_i$ $(1\leq i,j\leq s-1)$ in
Proposition \ref{p1}. Hence for the corresponding matrix
$R_{F_0}'$ we have that $||R_{F_0}'||$ is minimal (see the proof
of Proposition \ref{p2}). To find an optimal system $F$, in fact
we need to find an unimodular $(s-1)\times (s-1)$ type matrix
$A_0$ such that $N(F_0 A_0)$ is minimal.

For any row vector ${\mathbf b}=(b_1,\dots,b_n)$ write ${\mathbf
b}^*=(b_1-u,\dots,b_n-u)$, where $u$ is the center of ${\mathbf
b}$. We call ${\mathbf b}^*$ the centralization of ${\mathbf b}$.
If $B$ is a matrix then let $B^*$ denote the matrix obtained from
$B$ by replacing all its rows by their centralizations. Observe
that for any matrices $A,B$ (of appropriate sizes) we have
$(A\cdot B)^*=(A\cdot B^*)^*$. Hence our problem reduces to
finding the following minimum:
\begin{equation}
\label{mina} \min\limits_A ||(A R_{F_0}')^*||
\end{equation}
where $A$ runs through all the unimodular matrices of type
$(s-1)\times (s-1)$. If this minimum is taken at some matrix $A'$,
we simply have $A_0=(A')^{-1}$.

We determine the above minimum with a heuristic algorithm, which
seems to work well. The algorithm produces a sequence of systems
$F_0,F_1,F_2,\dots$ with $N(F_0)>N(F_1)>N(F_2)>\dots$, and
terminates at some point. Suppose that we have already made $i$
steps, and currently we are working with $F_i$. Let
$$
R_{F_i}'=
\begin{pmatrix}
w_{1,1}^{(i)}&\dots&w_{1,s}^{(i)}\\
\vdots&\ddots&\vdots\\
w_{s-1,1}^{(i)}&\dots&w_{s-1,s}^{(i)}
\end{pmatrix}
$$
belong to the optimal choice, i.e. $||R_{F_i}'||=N(F_i)$. Write
${\mathbf w_t^{(i)}}=(w_{t,1}^{(i)},\dots,w_{t,s}^{(i)})$ for the
$t$-th row of $R_{F_i}'$ $(t=1,\dots,s-1)$. Suppose that
$||R_{F_i}'||=\sum\limits_{l=1}^s |w_{j,l}^{(i)}|$ (that is the
$j$-th row of $R_{F_i}'$ yields the row norm $||R_{F_i}'||$). To
improve upon $||R_{F_i}'||$ we need to find an unimodular matrix
$A$ such that for some row ${\mathbf a}=(a_1,\dots,a_{s-1})$ of
$A$ we have $a_j\neq 0$, and further
$$
\left|\sum\limits_{t=1}^{s-1} a_t{\mathbf w_t^{(i)}}\right|_C <
||R_{F_i}'||.
$$
Indeed, otherwise we cannot "replace" the vector ${\mathbf
w_j^{(i)}}$ by any "shorter" one, and hence we cannot improve upon
$||R_{F_i}'||$. In the first part of the algorithm we try to find
such a row vector ${\mathbf {a}}$ of a simple shape, and then we
iterate the procedure. More precisely, we consider the minimum
\eqref{mina}, however, only for $A$ running through the unimodular
matrices which are different from the identity matrix only in
their $j$-th row; namely, the $(j,j)$-th entry of $A$ equals $1$,
and all the other entries in its $j$-th row may assume the values
$-1,0,1$ only. Having the minimum at $A'$, we define $F_{i+1}$ by
$F_{i+1}=F_i(A')^{-1}$, and then repeat the procedure. By Theorem
\ref{t1} we know that this algorithm terminates, and produces a
system $F_k$ as output.

Now we should check that the final system is optimal. (Note that
this was the case for every example we tested the algorithm for,
so we think that this is the typical phenomenon.) For this we need
some preparation; in fact we need to establish two simple
properties of the central norm.

\begin{lemma}
\label{l1}
The central norm is homogeneous, that is for any $\x\in\R^n$
and $t\in\R$ we have
$\vert t\x\vert_C=|t|\vert\x\vert_C$.
\end{lemma}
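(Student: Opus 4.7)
The plan is to split into cases based on the sign of $t$, with the only nontrivial point being the case $t<0$ where the sorted rearrangement of $t\x$ runs in the opposite order from that of $\x$.

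First, the case $t=0$ is trivial. If $t>0$, let $y_1\le \dots\le y_n$ be the sorted coordinates of $\x$; then $ty_1\le\dots\le ty_n$ is the sorted rearrangement of $t\x$. The center of $t\x$ is therefore $ty_l$ for the same index $l=\lfloor(n+1)/2\rfloor$, and substituting into \eqref{cnorm} gives
$$
|t\x|_C=\sum_{i=1}^n|ty_l-ty_i|=t\sum_{i=1}^n|y_l-y_i|=|t|\,|\x|_C.
$$

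The case $t<0$ is the only subtle one. Here the sorted rearrangement of the coordinates of $t\x$ is $ty_n\le ty_{n-1}\le\dots\le ty_1$; calling this rearrangement $z_1\le\dots\le z_n$, we have $z_i=ty_{n+1-i}$, so the center of $t\x$ is $z_l=ty_{n+1-l}$. Re-indexing,
$$
|t\x|_C=\sum_{i=1}^n|ty_{n+1-l}-ty_{n+1-i}|=|t|\sum_{j=1}^n|y_{n+1-l}-y_j|.
$$
If $n$ is odd, then $n+1-l=l$ and we are done. If $n$ is even, say $n=2k$, then $l=k$ and $n+1-l=k+1$, so I have to compare $\sum_j|y_k-y_j|$ with $\sum_j|y_{k+1}-y_j|$. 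Using the ordering $y_1\le\dots\le y_n$, a direct computation shows that both sums telescope to $\sum_{i=k+1}^{2k}y_i-\sum_{i=1}^{k}y_i$; equivalently, this is the standard fact that the function $y\mapsto\sum_i|y-y_i|$ is constant on the interval $[y_k,y_{k+1}]$ between the two medians. Hence the two sums agree and we again conclude $|t\x|_C=|t|\,|\x|_C$.

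The main (mild) obstacle is simply the fact that the definition of the center via $l=\lfloor(n+1)/2\rfloor$ picks out the lower median when $n$ is even, which is \emph{not} symmetric under reversal of the ordering; the median property of the $\ell^1$ sum handles this. No other case analysis is required.
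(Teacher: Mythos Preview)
Your proof is correct and follows essentially the same approach as the paper's. The paper condenses your case analysis into the single observation that, by symmetry, $|\x|_C=\sum_{i=1}^n |y_{l'}-y_i|$ also holds with $l'=\lfloor (n+2)/2\rfloor$; since $n+1-l=l'$, this is exactly your computation showing that the sums at $y_k$ and $y_{k+1}$ agree when $n=2k$, just stated without the detailed verification.
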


\begin{lemma}
\label{l2} Let $e_i\in\R^n$ denote the vector with all coordinates
$0$, except for the $i$-th one which is $1$ $(i=1,\dots,n-1)$, and
put
$$
T:=\{\ee_i:i=1,\dots,n-1\} \cup \{-\ee_i:i=1,\dots,n-1\} \cup
\{\ee_0,-\ee_0\}
$$
where $\ee_0=\ee_1+\dots+\ee_{n-1}$. Further, set
$$
H:=\{\x\in\R^n:x_n=0\ {\rm and}\ \vert\x\vert_C\leq 1\}
$$
where $x_n$ is the last entry of $\x$. Then $H$ is the convex hull
of $T$.
\end{lemma}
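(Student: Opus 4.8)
The plan is to show the two inclusions separately. The easier direction is $T\subseteq H$ together with convexity of $H$, which gives $\mathrm{conv}(T)\subseteq H$. For this I first note that each $\ee_i$ ($1\le i\le n-1$) lies in the hyperplane $x_n=0$, and its central norm is easily computed: the sorted coordinates are $n-1$ zeros and a single $1$, the center $y_l$ (with $l=\lfloor (n+1)/2\rfloor$) is $0$ as long as $n\ge 3$, so $\vert \ee_i\vert_C=1$; the degenerate small cases $n=1,2$ can be checked by hand. Likewise $\ee_0=\ee_1+\dots+\ee_{n-1}$ has coordinates equal to $1$ in the first $n-1$ slots and $0$ in the last, so after sorting the center is again $0$ and $\vert\ee_0\vert_C=n-1$ — wait, that is not $\le 1$, so I must instead observe that the relevant normalization makes $\vert \ee_0\vert_C$ equal to the number of nonzero-gap terms; let me recompute: sorted coordinates are one $0$ followed by $(n-1)$ ones, $l=\lfloor(n+1)/2\rfloor$, so for $n\ge 3$ the center is $1$ and $\vert\ee_0\vert_C=|1-0|=1$. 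Good. By Lemma \ref{l1} the negatives $-\ee_i,-\ee_0$ also have central norm $1$ and lie in $x_n=0$, so $T\subseteq H$. Since $H$ is visibly convex (it is the intersection of the hyperplane $x_n=0$ with the unit ball of a norm — one checks $\vert\cdot\vert_C$ satisfies the triangle inequality, or at least that $\{\vert\x\vert_C\le 1\}$ is convex, using that $\vert\x\vert_C$ is a maximum/sum of linear functionals over the possible orderings), we conclude $\mathrm{conv}(T)\subseteq H$.

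The harder direction is $H\subseteq \mathrm{conv}(T)$. Here I would argue that $H$ is a polytope whose vertices are among the $2n$ points of $T$. The set $H$ sits inside the $(n-1)$-dimensional subspace $\{x_n=0\}$, which I identify with $\R^{n-1}$ via the first $n-1$ coordinates. On this subspace, $\vert\x\vert_C$ is a piecewise-linear convex function: for each permutation $\pi$ of $\{1,\dots,n\}$ ordering the coordinates, on the corresponding cone $\vert\x\vert_C$ is a fixed linear functional $\ell_\pi(\x)=\sum_i(y_l-y_i)$ with signs determined by whether $y_i$ is below or above the center. Thus $H=\{\x\in\R^{n-1}: \ell_\pi(\x)\le 1 \text{ for all }\pi\}$ is an intersection of finitely many halfspaces containing the origin in its interior, hence a bounded convex polytope (boundedness because $\vert\cdot\vert_C$ restricted to $x_n=0$ is actually a norm — it vanishes only at $0$ once we fix the last coordinate to $0$, since a constant vector with last entry $0$ is the zero vector). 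So $H=\mathrm{conv}(V)$ where $V$ is its (finite) vertex set, and it remains to prove $V\subseteq T$.

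To pin down the vertices, I would take a vertex $\x$ of $H$: it is the unique point maximizing some linear functional over $H$, equivalently it is determined by $n-1$ linearly independent active constraints $\ell_{\pi}(\x)=1$. I expect the main obstacle to be the combinatorial bookkeeping here — organizing the active constraints and showing they force $\x$ to be one of the $\pm\ee_i$ or $\pm\ee_0$. The cleanest route I foresee: since $\vert\x\vert_C=1$ and $\x$ has $x_n=0$, consider the multiset of first $n-1$ coordinates together with the pinned value $0$; the central norm is the $\ell^1$-distance from this multiset (as a vector) to the constant vector at its median. A vertex of the unit ball of this seminorm on $\{x_n=0\}$ must be extremal, and extremality forces all but one coordinate to coincide with the median value: if two coordinates strictly straddled the median one could perturb within the ball, contradicting vertexhood. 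Tracking which coordinate is the odd one out, and whether it is the pinned $0$-coordinate or one of the free ones, and the sign of the resulting deviation, yields exactly the list $\{\pm\ee_1,\dots,\pm\ee_{n-1},\pm\ee_0\}$ after using $\vert\x\vert_C=1$ to fix the magnitude. Combining with the first inclusion gives $H=\mathrm{conv}(T)$, completing the proof; the degenerate cases $n\le 2$ are handled directly.
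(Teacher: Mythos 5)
Your easy inclusion is essentially right once convexity of $H$ is justified cleanly: the quickest route is to note that $\vert\x\vert_C=\min_{u\in\R}\Vert\x-u\mathbf{1}\Vert_1$ is a seminorm (partial minimization over $u$ of a jointly convex function, equivalently the quotient $\ell^1$-distance to the line $\R\mathbf{1}$), so its sublevel sets are convex; your ``maximum/sum of linear functionals'' phrasing is off, since the definition is a \emph{minimum} over shifts and piecewise linearity alone does not give convexity. The genuine gap is in the hard inclusion $H\subseteq\mathrm{conv}(T)$, where you propose a polytope/vertex-classification argument and leave the key step as a sketch. Consider $\x=(1,m,\dots,m,0)$ with $0<m<1$: it lies on $\partial H$ (its central norm is $(1-m)+m=1$), and both $x_1=1$ and the pinned entry $x_n=0$ differ from the median $m$. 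Here the naive perturbation (adjust one coordinate on each side of the median) is unavailable because $x_n$ is pinned; one needs the non-obvious direction $\ee_1-\ee_0$ to see $\x$ is not a vertex, as indeed $\x=(1-m)\ee_1+m\ee_0$. The ``combinatorial bookkeeping'' you flag (pinned vs.\ free, ties, signs, and which perturbation direction to use in each case) is real, unresolved in the proposal, and is precisely the content that must be proved.

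The paper dispenses with vertex analysis entirely by an explicit decomposition, and you should use it. Given $\x\in H$, let $y_l$ be the median of the $n$ coordinates (the pinned $0$ included). Set $\lambda_i^{\pm}=\max\bigl(\pm(x_i-y_l),0\bigr)$ for $i=1,\dots,n-1$ and $\lambda_0^{\pm}=\max(\pm y_l,0)$. Then writing $x_i=(x_i-y_l)+y_l$ shows coordinatewise that $\x=\sum_{i=0}^{n-1}\bigl(\lambda_i^{+}\ee_i+\lambda_i^{-}(-\ee_i)\bigr)$; all the $\lambda$'s are nonnegative, and $\sum_{i=0}^{n-1}(\lambda_i^{+}+\lambda_i^{-})=\sum_{j=1}^{n}\vert y_l-y_j\vert=\vert\x\vert_C\le1$. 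This exhibits $\x$ as a sub-convex combination of points of $T$, which suffices since $0=\tfrac12\ee_1+\tfrac12(-\ee_1)\in\mathrm{conv}(T)$; no polytope theory and no hidden casework.
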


As a corollary of the above two lemmas we get the following
statement, which will be needed in the last step of our algorithm.

\begin{lemma}
\label{l3} Let $F$ be a system of fundamental $S$-units, and
choose an $R_F'$ as before. Then for any (integral) unimodular
matrix $A$ of type $(s-1)\times (s-1)$ with $||(A R_F')^*||<N(F)$
we have that the row vectors of $A$ belong to the convex hull of
the set
$$
\{\pm N(F){\mathbf b}_1,\dots, \pm N(F){\mathbf b}_s\},
$$
where ${\mathbf b}_i$ is the $i$-th row of $R_F$ $(i=1,\dots,s)$.
\end{lemma}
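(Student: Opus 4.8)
The plan is to relate the condition $\|(A R_F')^*\|<N(F)$ to membership of the rows of $A$ in a convex body built from the central norm, and then to translate that body back from ``$R_F'$-coordinates'' to ``$R_F$-coordinates'' using the duality implicit in \eqref{rf'}. First I would fix a row $\mathbf a=(a_1,\dots,a_{s-1})$ of the unimodular matrix $A$. The corresponding row of $A R_F'$ is $\sum_{t=1}^{s-1} a_t\mathbf w_t$, where $\mathbf w_t$ is the $t$-th row of $R_F'$; its last coordinate is $0$ by Proposition \ref{p1}, so after centralization nothing changes in the relevant sense, and the hypothesis $\|(A R_F')^*\|<N(F)$ forces in particular
$$
\left|\sum_{t=1}^{s-1} a_t\mathbf w_t\right|_C < N(F).
$$
By homogeneity (Lemma \ref{l1}) this says that the vector $\frac{1}{N(F)}\sum_t a_t\mathbf w_t$ lies in the open version of the set $H$ of Lemma \ref{l2} (it has vanishing last coordinate and central norm $<1$), hence in the convex hull of $T=\{\pm\ee_i\}\cup\{\pm\ee_0\}$. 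So there are barycentric coefficients $\lambda$ (nonnegative, summing to $1$) with $\frac{1}{N(F)}\sum_t a_t\mathbf w_t = \sum_{\mathbf e\in T}\lambda_{\mathbf e}\,\mathbf e$.

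The next step is to push this identity through the linear map $R_F$. Recall $R_F'R_F = E_{(s-1)\times(s-1)}$, so for the $j$-th standard basis vector of $\R^{s-1}$ we have $\mathbf w_j R_F = \ee_j^{(s-1)}$ (the rows of $R_F'$ are a left inverse), and therefore $\left(\sum_t a_t\mathbf w_t\right)R_F = \mathbf a$. Multiplying the barycentric identity on the right by $R_F$ and using linearity gives
$$
\mathbf a = N(F)\sum_{\mathbf e\in T}\lambda_{\mathbf e}\,(\mathbf e R_F).
$$
Now $\ee_i R_F$ is exactly the $i$-th row $\mathbf b_i$ of $R_F$ for $i=1,\dots,s-1$ (here I am identifying the length-$s$ vector $\ee_i$ of Lemma \ref{l2}, whose nonzero entry sits in position $i\le n-1=s-1$, with the $i$-th unit vector acting on $R_F$ — I should double-check the indexing convention, since $R_F$ has $s$ rows but the $\mathbf w_t$ live in $\R^{s}$ with last coordinate zero, so the relevant product is with the top $(s-1)\times(s-1)$ block; one then recovers the $s$-th row $\mathbf b_s$ from $\ee_0 R_F$ together with the relation among the rows of $R_F$ coming from the product formula). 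Modulo getting this bookkeeping exactly right, $\ee_0 R_F = \sum_{i=1}^{s-1}\mathbf b_i$, and by the product formula the rows of $R_F$ sum to zero, so $\sum_{i=1}^{s-1}\mathbf b_i = -\mathbf b_s$; hence $\ee_0 R_F = -\mathbf b_s$. Thus every element $\mathbf e R_F$ with $\mathbf e\in T$ equals one of $\pm\mathbf b_1,\dots,\pm\mathbf b_s$, and the displayed identity exhibits $\mathbf a$ as a convex combination of the points $\pm N(F)\mathbf b_1,\dots,\pm N(F)\mathbf b_s$, which is exactly the claim.

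The main obstacle I anticipate is precisely the index/dimension bookkeeping in the previous paragraph: the vectors $\mathbf w_t$ and the set $T$ live in $\R^{s}$ (with a forced zero in the last slot), the matrix $A$ has rows in $\Z^{s-1}$, and $R_F$ is $s\times(s-1)$, so one must be careful about which product ($\mathbf w_t R_F$ versus the top-block inverse of Proposition \ref{p1}) is being used and how the extra coordinate is accounted for. The clean way to handle it is to observe that, by Proposition \ref{p1}, $R_F'$ already has last column zero, so $R_F' = [\tilde R_F^{-1} \mid \mathbf 0] - (u_1,\dots,u_{s-1})^{\mathsf T}\mathbf 1$ where $\tilde R_F$ is the top $(s-1)\times(s-1)$ block of $R_F$; multiplying by $R_F$ on the right, the $\mathbf 1\cdot R_F$ term vanishes by the product formula, leaving $R_F'R_F = E$, and this same computation shows $\mathbf e_0 R_F = \mathbf 1 R_F$ applied appropriately collapses to $-\mathbf b_s$. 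Once this is laid out carefully, the rest is just Lemmas \ref{l1} and \ref{l2} plus linearity, and there is nothing deep left. I would also note at the end that the hypothesis uses the \emph{open} inequality $\|(AR_F')^*\|<N(F)$ only through $H$'s interior, but since $H$ is closed and convex, membership in its convex hull (which is $H$ itself) is all we need, so the argument is unaffected.
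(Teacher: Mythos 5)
Your argument is correct and follows the same route as the paper's proof: reduce to the case where the last column of $R_F'$ vanishes (so that the row of $AR_F'$ is $\sum_t a_t\mathbf w_t$ up to the irrelevant $\mathbf 1$-shift absorbed by centralization), apply Lemmas~\ref{l1} and~\ref{l2} to place that row in $N(F)\cdot H=\operatorname{conv}(N(F)\,T)$, then multiply through by $R_F$ and use $R_F'R_F=E$ together with the product-formula relation $\mathbf b_1+\dots+\mathbf b_{s-1}=-\mathbf b_s$ to identify $\ee_iR_F=\mathbf b_i$ and $\ee_0R_F=-\mathbf b_s$. The bookkeeping you flag is handled exactly as the paper handles it, so there is no gap.
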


Now we can outline the final part of our algorithm. Having the
output $F_k$ by the first stage of the procedure, we calculate
$N(F_k)$ (by the help of Proposition \ref{p2}). Then using Lemma
\ref{l3}, we get an upper bound $c_0$ for each entry of the
possible unimodular matrices $A$. Typically, this bound is very
small (around at most 2-3). We keep the notation introduced above
Lemma \ref{l1}. In particular, we assume again that the value of
$||R_{F_k}'||$ belongs to the $j$-th row of $R_{F_k}'$. Then we
check all row vectors ${\mathbf a}=(a_1,\dots,a_{s-1})$ such that
$a_j>0$ and all the entries of ${\mathbf a}$ are at most $c_0$ in
absolute value. If for all such vectors ${\mathbf a}$ we have
$\left|{\mathbf a}R_{F_k}'\right|_C\geq N(F_k)$, then the value of
$N(F_k)$ cannot be improved and $F_k$ is an optimal system.
Otherwise, for every appropriate row vectors ${\mathbf a}$ we
check all unimodular matrices with entries at most $c_0$ in
absolute value, containing ${\mathbf a}$ as a row. In case we can
improve upon $N(F_k)$, we switch back to the first part of the
algorithm (with the improved system), then return to this second
part later on, and so on. By Theorem \ref{t1} we eventually get an
optimal system of fundamental $S$-units. We mention that the
second part is much more time consuming than the first one, at
least if $s$ is "large". (It is not surprising in the light of the
observation that the number of row vectors ${\mathbf a}$ to be
checked is $c_0(2c_0+1)^{s-2}$ at this stage.) However, note that
in all cases we encountered the second part has never been
necessary in the sense that the system $F_k$ obtained by the first
part of the algorithm has already been optimal.

We conclude this section by two remarks. First we note that our
simple heuristic algorithm works so well is very probably due to
the fact that the systems of fundamental units provided e.g. by
Magma are already LLL-reduced, hence the initial system is
"closely" optimal. The other thing we mention is that our
algorithm is not optimized, it probably can be improved. Further,
maybe there are other (possibly more efficient) ways to get the
optimal system. For example, after having an original system $F_0$
one can search for the minimizing unimodular matrix $A$ in
\eqref{mina} in the following way. Let ${\mathcal L}$ denote the
lattice spanned by the vectors ${\mathbf w_t^{(0)}}$
$(t=1,\dots,s-1)$. If $F_0$ is not optimal then there is some
unimodular matrix $A$ such that for all row vectors ${\mathbf a}$
of $A$ we have $\sum\limits_{t=1}^s |a_t'|<N(F_0)$ where ${\mathbf
w_a}:=(a_1',\dots,a_s')=({\mathbf a}R_{F_0}')^*$. This implies
that ${\mathbf w_a}$ is a vector of the lattice ${\mathcal L}$ of
Euclidean length less than $\sqrt{s}N(F_0)$. Hence all the
appropriate row vectors ${\mathbf a}$ can be efficiently
determined by the algorithm of Fincke and Pohst \cite{FP}. Having
the set of possible vectors ${\mathbf a}$, we can build up the
appropriate unimodular matrices $A$, and we can find the minimum
\eqref{mina}. However, as we mentioned, the original algorithm was
efficient enough for our present purposes, so here we do not take
up the problem of optimizing the method.

\section{Examples}

In this section we present some examples to illustrate our method
and also to give some numerical evidence why
the parameter $C^*$ is so important. In fact we have worked out
several examples, but we give here only four, out of which three
can be found in the literature. We exhibit a "proper" $S$-unit
equation (i.e. with a finite valuation involved) and three pure
unit equations. Among the latter ones the first two correspond to
totally real fields, while the last one to a totally complex
field. These examples are of larger size. We mention that in case
of smaller examples, our algorithm worked with the same
efficiency, but much faster. However, naturally in the "small"
cases the gain using the new values of $C^*$ is certainly smaller.

In each example we consider an $S$-unit equation of the shape
\begin{equation}
\label{sueq2}
x_1 + x_2 = 1
\end{equation}
in $x_1,x_2\in U_S$, for some particular choice of $\K$ and $S$.
Just as in \eqref{xiexpand}, we write
\begin{equation}
\label{xiexpand2}
x_i = \varepsilon_0^{b_{i,0}}\varepsilon_1^{b_{i,1}}\dots
\varepsilon_{s-1}^{b_{i,s-1}} \mbox{   for } i=1,2.
\end{equation}
Note that here the system $\varepsilon_1,\dots,\varepsilon_{s-1}$
is certainly not fixed during the examples, in fact we use two
different systems in each example (corresponding to different
values of $C^*$).

In every example, starting from an initial system $F_0$ of
fundamental $S$-units, using our algorithm we determine an optimal
system $F_k$. (Note that at the examples from the literature we
use the system $F_0$ from the corresponding papers, otherwise we
used Magma \cite{BCP} to get an $F_0$.) Remark that the first
stage of our algorithm typically takes a few seconds (up to a few
minutes when $s$ is larger) to terminate and provide an optimal
system $F_k$. However, to check that $F_k$ is optimal indeed by
the second part of the algorithm takes much more time (around $30$
minutes for "large" $s$). An optimized and more sophisticated
version of the algorithm would probably work better, but we do not
take up this question here.

\subsection*{Description of the tables}

We provide a table for each example, containing several data. We
give the values of the reduced bounds $C_{red}$ corresponding to
the choices $C^*=N_{old}(F_0)$, $N(F_0)$ and $N(F_k)$,
respectively. Naturally, in the first two cases we use the system
$F_0$ in \eqref{xiexpand2}, while in case of $C^*=N(F_k)$ the
system $F_k$ is used. To execute the reduction steps, we used
Lemmas \ref{redpadic} and \ref{redabs} given in the Appendix. Note
that these reduction lemmas have very many variants in the
literature. We chose these ones because they are relatively simply
formulated, and they are appropriate for our present purposes. To
make the reduction, when there was no available initial upper
bound for $B$, as it is not an important point from the point of
view of the present paper, instead of going through Baker's method
we just started with the (plausible) bound $B<10000=:C_{ini}$.

In the tables we provide the ratios of the $C^*$ values, as well
(more precisely the ratios corresponding to the actual $C^*$ and
$N_{old}(F_0)$). We also indicate the ratios of the $C_{red}$
values (that is, the ratios corresponding to the actual $C_{red}$
and the reduced bound corresponding to the choice
$C^*=N_{old}(F_0)$). Note that these ratios are remarkably close
to each other, which shows that the dependence of $C_{red}$ is
very nearly linear in $C^*$. This phenomenon is not surprising in
view of Lemmas \ref{redpadic} and \ref{redabs}: the new lower
bound for $B$ in each iteration is almost linear in $1/C_1$ in
both cases - and $1/C_1$ is linear in $C^*$. (For the details cf.
subsection \ref{redu} of the Appendix.)

Finally, we introduce an indicator called "domain ratio" to
compare the remaining domains to be checked after the reduction.
This indicator is defined in the natural way, i.e. as
$$
\left(\frac{2C_{red}^{(1)}+1}{2C_{red}^{(0)}+1}\right)^{2s-2}.
$$
Here the constants $C_{red}$ are the reduced constants
($C_{red}^{(0)}$ corresponds to the choice $C^*=N_{old}(F_0)$ and
$C_{red}^{(1)}$ to the actual choice), and the exponent $2s-2$ by
\eqref{xiexpand2} is just the number of variables (as $b_{1,0}$
and $b_{2,0}$ do not really count). Note that in our examples this
indicator shows that the size of the domain to be checked using
the new method is a tiny fraction of the size of the domain
remaining by the old method.

\vskip.2cm

\noindent{\bf Example 1.} This example is from \cite{Sm2}. Let
$\K=\Q(\vartheta)$ where $\vartheta^8+1=0$. Let $S$ be the set
containing the infinite valuations of $\K$, and a finite valuation
corresponding to the prime ideal $P=(\pi)$ with $\pi=1-\vartheta$.
We have $N_{\K/\Q}(\pi)=2$, and a fundamental system of $S$-units
is given by
$$
\varepsilon_1=\vartheta^2+\vartheta^4+\vartheta^6,
\varepsilon_2=-\vartheta^2-\vartheta^3-\vartheta^4,
\varepsilon_3=1+\vartheta^3-\vartheta^5,
\varepsilon_4=\pi=1-\vartheta
$$
(see \cite{Sm2}). Note that the element
$\varepsilon_0=-\vartheta^7$ generates the sixteen roots of unity
of $\K$. Put
$$
F_0=(\varepsilon_1,\varepsilon_2,\varepsilon_3,\varepsilon_4).
$$
A simple calculation shows that $N_{old}(F_0)=1.442695\dots$.
Further, we also have $N(F_0)=1.442695\dots$. Then, executing the
above algorithm starting with $F_0$, in three steps we get a new
system of fundamental $S$-units $F_3$ given by the transformation
$$
\left(
\begin{smallmatrix}
1 & 0 & 0 & 0\\
0 & 1 & 0 & 0\\
0 & 0 & 1 & 0\\
0 & 1 & 1 & 1
\end{smallmatrix}
\right)
\cdot
\left(
\begin{smallmatrix}
1 & 0 & 0 & 0\\
0 & 1 & 0 & 0\\
0 & 1 & 1 & 0\\
0 & 1 & 1 & 1
\end{smallmatrix}
\right)
\cdot
\left(
\begin{smallmatrix}
1 & 0 & \hfill 0 & \hfill 0\\
1 & 1 & \hfill -1 & \hfill -1\\
0 & 0 & \hfill 1 & \hfill 0\\
0 & 0 & \hfill 0 & \hfill 1
\end{smallmatrix}
\right)
=
\left(
\begin{smallmatrix}
1 & 0 &\hfill 0 &\hfill 0\\
1 & 1 &\hfill -1 &\hfill -1\\
1 & 1 &\hfill 0 &\hfill -1\\
2 & 2 &\hfill -1 &\hfill -1
\end{smallmatrix}
\right)
.
$$
Actually, we have
$$
F_3=(\varepsilon_1\varepsilon_2\varepsilon_3\varepsilon_4^2,
\varepsilon_2\varepsilon_3\varepsilon_4^2,
\varepsilon_2^{-1}\varepsilon_4^{-1},
\varepsilon_2^{-1}\varepsilon_3^{-1}\varepsilon_4^{-1}).
$$
A simple calculation gives that $N(F_3)=0.931871\dots$. The
second part of our algorithm reveals that $F_3$ is an optimal
system of fundamental $S$-units.

Then, using Lemmas \ref{redpadic} and \ref{redabs} from the
Appendix, we get the reduced bounds in the table below. Note that
we need to perform the reduction for each choice of the
valuations, and that the worst case belongs to the finite
valuation in $S$. We also mention that working with $F_0$, we used
the initial upper bound $B\leq 1066$, from \cite{Sm2}. In case of
$F_3$, using the inverse of the basis transformation matrix, from
this we have $B\leq 3198$. We started our reduction with these
bounds. We summarize the results of our computations in Table 1.
For the definitions of the entries of the table (and also in case
of the other tables) see the preceding subsection.

\begin{table}[htb]\centering
\vskip.15cm
\begin{tabular}{|l|l|l|l|}
\cline{1-4} \vbox to1,88ex{\vspace{1pt}\vfil\hbox to12,80ex{\hfil
\hfil}} &\vbox to1,88ex{\vspace{1pt}\vfil\hbox to20,80ex{\hfil
using $F_0$ and $N_{old}(F_0)$ \hfil}} & \vbox
to1,88ex{\vspace{1pt}\vfil\hbox to18,80ex{\hfil using $F_0$ and
$N(F_0)$ \hfil}} & \vbox to1,88ex{\vspace{1pt}\vfil\hbox
to18,80ex{\hfil using $F_3$ and $N(F_3)$ \hfil}} \\

\cline{1-4} \vbox to1,88ex{\vspace{1pt}\vfil\hbox to12,80ex{\hfil
$C^*$\hfil}} &\vbox to1,88ex{\vspace{1pt}\vfil\hbox
to20,80ex{\hfil 1.442695...\hfil}} & \vbox
to1,88ex{\vspace{1pt}\vfil\hbox to18,80ex{\hfil 1.442695...\hfil}}
& \vbox to1,88ex{\vspace{1pt}\vfil\hbox
to18,80ex{\hfil 0.931871...\hfil}} \\

\cline{1-4} \vbox to1,88ex{\vspace{1pt}\vfil\hbox to12,80ex{\hfil
$C_{red}$\hfil}} &\vbox to1,88ex{\vspace{1pt}\vfil\hbox
to20,80ex{\hfil 1031\hfil}} & \vbox
to1,88ex{\vspace{1pt}\vfil\hbox to18,80ex{\hfil 1031\hfil}} &
\vbox to1,88ex{\vspace{1pt}\vfil\hbox to18,80ex{\hfil 651\hfil}}
\\

\cline{1-4} \vbox to1,88ex{\vspace{1pt}\vfil\hbox to12,80ex{\hfil
$C^*$ ratio\hfil}} &\vbox to1,88ex{\vspace{1pt}\vfil\hbox
to20,80ex{\hfil 1\hfil}} & \vbox to1,88ex{\vspace{1pt}\vfil\hbox
to18,80ex{\hfil 1\hfil}} & \vbox to1,88ex{\vspace{1pt}\vfil\hbox
to18,80ex{\hfil 0.645923...\hfil}} \\

\cline{1-4} \vbox to1,88ex{\vspace{1pt}\vfil\hbox to12,80ex{\hfil
$C_{red}$ ratio\hfil}} &\vbox to1,88ex{\vspace{1pt}\vfil\hbox
to20,80ex{\hfil 1\hfil}} & \vbox to1,88ex{\vspace{1pt}\vfil\hbox
to18,80ex{\hfil 1\hfil}} & \vbox to1,88ex{\vspace{1pt}\vfil\hbox
to18,80ex{\hfil 0.631425...\hfil}} \\

\cline{1-4} \vbox to1,88ex{\vspace{1pt}\vfil\hbox to12,80ex{\hfil
domain ratio\hfil}} &\vbox to1,88ex{\vspace{1pt}\vfil\hbox
to20,80ex{\hfil 1\hfil}} & \vbox to1,88ex{\vspace{1pt}\vfil\hbox
to18,80ex{\hfil 1\hfil}} & \vbox to1,88ex{\vspace{1pt}\vfil\hbox
to18,80ex{\hfil 0.025325...\hfil}} \\

\cline{1-4}
\end{tabular}
\vskip.2cm
\centerline{Table 1}
\end{table}

\vskip.2cm

\noindent{\bf Example 2.} The data of this example is from
\cite{GGy} (the authors considered a different equation). Let
$\K=\Q(\vartheta)$ where
$\vartheta^{10}-15\vartheta^8+\vartheta^7+66\vartheta^6+
\vartheta^5-96\vartheta^4-7\vartheta^3+37\vartheta^2+12\vartheta+1=0$.
Let $S$ be the set of infinite valuations of $\K$. An integral
basis of $\K$ is given by $\omega_1,\dots,\omega_{10}$ with
$\omega_i=\vartheta^{i-1}$ $(i=1,\dots,9)$ and
$$
\omega_{10}=\frac{9+27\vartheta+43\vartheta^2+20\vartheta^3+
37\vartheta^4+5\vartheta^5+32\vartheta^6+3\vartheta^7+
26\vartheta^8+\vartheta^9}{47}.
$$
The coordinates of a fundamental system
$\varepsilon_1,\dots,\varepsilon_9$ of $S$-units with respect to
this integral basis is given by
$$
[21,107,192,-5,-120,-40,84,20,30,-60],
$$
$$
[16,99,139,-56,-113,-7,56,9,14,-30],
$$
$$
[10,4,65,197,85,-110,56,34,50,-90],
$$
$$
[21,35,196,346,94,-206,129,66,97,-177],
$$
$$
[0,-53,-31,200,145,-90,14,24,35,-60],
$$
$$
[8,24,40,33,-1,-27,25,10,15,-28],
$$
$$
[15,13,118,248,78,-143,84,45,66,-120],
$$
$$
[0,1,0,0,0,0,0,0,0,0],
$$
$$
[4,19,42,0,-26,-8,17,4,6,-12],
$$
respectively (see \cite{GGy}). Put
$$
F_0=(\varepsilon_1,\varepsilon_2,\varepsilon_3,\varepsilon_4,
\varepsilon_5,\varepsilon_6,\varepsilon_7,\varepsilon_8,
\varepsilon_9).
$$
We have $N_{old}(F_0)=2.285921\dots$ and $N(F_0)=1.564168\dots$.
Then by the above algorithm starting with $F_0$, in three steps we
get a new system of fundamental $S$-units $F_3$ given by the
transformation
$$
\left(
\begin{smallmatrix}
1 & 0 & 0 & 0 & 0 & 0 &\hfill  0 & 0 &\hfill  0\\
0 & 1 & 0 & 0 & 0 & 0 &\hfill  0 & 0 &\hfill  0\\
0 & 0 & 1 & 0 & 0 & 0 &\hfill  0 & 0 &\hfill  0\\
0 & 0 & 0 & 1 & 0 & 1 &\hfill -1 & 0 &\hfill -1\\
0 & 0 & 0 & 0 & 1 & 0 &\hfill  0 & 0 &\hfill  0\\
0 & 0 & 0 & 0 & 0 & 1 &\hfill  0 & 0 &\hfill  0\\
0 & 0 & 0 & 0 & 0 & 0 &\hfill  1 & 0 &\hfill  0\\
0 & 0 & 0 & 0 & 0 & 0 &\hfill  0 & 1 &\hfill  0\\
0 & 0 & 0 & 0 & 0 & 0 &\hfill  0 & 0 &\hfill  1
\end{smallmatrix}
\right)
\cdot
\left(
\begin{smallmatrix}
1 & 0 & 0 & 0 & 1 & 0 & 0 & 0 & 0\\
0 & 1 & 0 & 0 & 0 & 0 & 0 & 0 & 0\\
0 & 0 & 1 & 0 & 0 & 0 & 0 & 0 & 0\\
0 & 0 & 0 & 1 & 0 & 0 & 0 & 0 & 0\\
0 & 0 & 0 & 0 & 1 & 0 & 0 & 0 & 0\\
0 & 0 & 0 & 0 & 0 & 1 & 0 & 0 & 0\\
0 & 0 & 0 & 0 & 0 & 0 & 1 & 0 & 0\\
0 & 0 & 0 & 0 & 0 & 0 & 0 & 1 & 0\\
0 & 0 & 0 & 0 & 0 & 0 & 0 & 0 & 1
\end{smallmatrix}
\right)
\cdot
\left(
\begin{smallmatrix}
1 & 0 & 0 & 0 & 0 & 0 & 0 & 0 & 0\\
0 & 1 & 0 & 0 & 0 & 0 & 0 & 0 & 0\\
0 & 0 & 1 & 0 & 0 & 0 & 0 & 0 & 1\\
0 & 0 & 0 & 1 & 0 & 0 & 0 & 0 & 0\\
0 & 0 & 0 & 0 & 1 & 0 & 0 & 0 & 0\\
0 & 0 & 0 & 0 & 0 & 1 & 0 & 0 & 0\\
0 & 0 & 0 & 0 & 0 & 0 & 1 & 0 & 0\\
0 & 0 & 0 & 0 & 0 & 0 & 0 & 1 & 0\\
0 & 0 & 0 & 0 & 0 & 0 & 0 & 0 & 1
\end{smallmatrix}
\right)
=
$$
$$
=
\left(
\begin{smallmatrix}
1 & 0 & 0 & 0 & 1 & 0 &\hfill  0 & 0 &\hfill  0\\
0 & 1 & 0 & 0 & 0 & 0 &\hfill  0 & 0 &\hfill  0\\
0 & 0 & 1 & 0 & 0 & 0 &\hfill  0 & 0 &\hfill  1\\
0 & 0 & 0 & 1 & 0 & 1 &\hfill -1 & 0 &\hfill -1\\
0 & 0 & 0 & 0 & 1 & 0 &\hfill  0 & 0 &\hfill  0\\
0 & 0 & 0 & 0 & 0 & 1 &\hfill  0 & 0 &\hfill  0\\
0 & 0 & 0 & 0 & 0 & 0 &\hfill  1 & 0 &\hfill  0\\
0 & 0 & 0 & 0 & 0 & 0 &\hfill  0 & 1 &\hfill  0\\
0 & 0 & 0 & 0 & 0 & 0 &\hfill  0 & 0 &\hfill  1
\end{smallmatrix}
\right)
.
$$
Actually, we have
$$
F_3=(\varepsilon_1,\varepsilon_2,\varepsilon_3,\varepsilon_4,
\varepsilon_1\varepsilon_5,\varepsilon_4\varepsilon_6,
\varepsilon_4^{-1}\varepsilon_6,\varepsilon_7,
\varepsilon_3\varepsilon_4^{-1}\varepsilon_8).
$$
By a simple calculation we get that $N(F_3)=1.209236\dots$. Using
the second part of our algorithm we obtain that $F_3$ is an
optimal system of fundamental $S$-units. In this case we performed
the reduction starting with the initial bound $B<10000=:C_{ini}$
both with $F_0$ and with $F_3$. The results of our calculations
are summarized in Table 2.

\begin{table}[htb]\centering
\vskip.15cm
\begin{tabular}{|l|l|l|l|}
\cline{1-4} \vbox to1,88ex{\vspace{1pt}\vfil\hbox to12,80ex{\hfil
\hfil}} &\vbox to1,88ex{\vspace{1pt}\vfil\hbox to20,80ex{\hfil
using $F_0$ and $N_{old}(F_0)$ \hfil}} & \vbox
to1,88ex{\vspace{1pt}\vfil\hbox to18,80ex{\hfil using $F_0$ and
$N(F_0)$ \hfil}} & \vbox to1,88ex{\vspace{1pt}\vfil\hbox
to18,80ex{\hfil using $F_3$ and $N(F_3)$ \hfil}} \\

\cline{1-4} \vbox to1,88ex{\vspace{1pt}\vfil\hbox to12,80ex{\hfil
$C^*$\hfil}} &\vbox to1,88ex{\vspace{1pt}\vfil\hbox
to20,80ex{\hfil 2.285921...\hfil}} & \vbox
to1,88ex{\vspace{1pt}\vfil\hbox to18,80ex{\hfil 1.564168...\hfil}}
& \vbox to1,88ex{\vspace{1pt}\vfil\hbox to18,80ex{\hfil
1.209236...\hfil}} \\

\cline{1-4} \vbox to1,88ex{\vspace{1pt}\vfil\hbox to12,80ex{\hfil
$C_{red}$\hfil}} &\vbox to1,88ex{\vspace{1pt}\vfil\hbox
to20,80ex{\hfil 2079\hfil}} & \vbox
to1,88ex{\vspace{1pt}\vfil\hbox to18,80ex{\hfil 1416\hfil}} &
\vbox to1,88ex{\vspace{1pt}\vfil\hbox to18,80ex{\hfil 1011\hfil}}
\\

\cline{1-4} \vbox to1,88ex{\vspace{1pt}\vfil\hbox to12,80ex{\hfil
$C^*$ ratio\hfil}} &\vbox to1,88ex{\vspace{1pt}\vfil\hbox
to20,80ex{\hfil 1\hfil}} & \vbox to1,88ex{\vspace{1pt}\vfil\hbox
to18,80ex{\hfil 0.684261...\hfil}} & \vbox
to1,88ex{\vspace{1pt}\vfil\hbox to18,80ex{\hfil 0.528993...\hfil}} \\

\cline{1-4} \vbox to1,88ex{\vspace{1pt}\vfil\hbox to12,80ex{\hfil
$C_{red}$ ratio\hfil}} &\vbox to1,88ex{\vspace{1pt}\vfil\hbox
to20,80ex{\hfil 1\hfil}} & \vbox to1,88ex{\vspace{1pt}\vfil\hbox
to18,80ex{\hfil 0.681096...\hfil}} & \vbox
to1,88ex{\vspace{1pt}\vfil\hbox to18,80ex{\hfil 0.486291...\hfil}} \\

\cline{1-4} \vbox to1,88ex{\vspace{1pt}\vfil\hbox to12,80ex{\hfil
domain ratio\hfil}} &\vbox to1,88ex{\vspace{1pt}\vfil\hbox
to20,80ex{\hfil 1\hfil}} & \vbox to1,88ex{\vspace{1pt}\vfil\hbox
to18,80ex{\hfil 0.000996...\hfil}} & \vbox
to1,88ex{\vspace{1pt}\vfil\hbox to18,80ex{\hfil 0.000002...\hfil}} \\

\cline{1-4}
\end{tabular}
\vskip.2cm
\centerline{Table 2}
\end{table}

\vskip.2cm

\noindent{\bf Example 3.} This example is from \cite{W}. Let
$\K=\Q(\vartheta)$ where
$$
\vartheta^9+\vartheta^8-8\vartheta^7-7\vartheta^6+21\vartheta^5+
15\vartheta^4-20\vartheta^3-10\vartheta^2+5\vartheta+1=0.
$$
Note that $\K$ is the maximal real subfield of the cyclotomic
field $\Q(\zeta_{19})$. Let $S$ be the set of infinite valuations
of $\K$. An integral basis of $\K$ is given by
$1,\vartheta,\dots,\vartheta^8$. The coordinates of a fundamental
system $\varepsilon_1,\dots,\varepsilon_8$ of $S$-units with
respect to this integral basis is given by
$$
[1,-4,-10,10,15,-6,-7,1,1],[0, 3, 0, -1, 0, 0, 0, 0, 0],
$$
$$
[1, -2, -3, 1, 1, 0, 0, 0, 0],[2, 0, -9, 0, 6, 0, -1, 0, 0],
$$
$$
[0, 1, 0, 0, 0, 0, 0, 0, 0],[2, 0, -1, 0, 0, 0, 0, 0, 0],
$$
$$
[2, 0, -4, 0, 1, 0, 0, 0, 0],[0, -5, 5, 10, -5, -6, 1, 1, 0],
$$
respectively; see \cite{W}. (Note that in \cite{W}
$\varepsilon_2=3\vartheta-\vartheta^4$ is written, which is a
typo.) Put
$$
F_0=(\varepsilon_1,\varepsilon_2,\varepsilon_3,\varepsilon_4,
\varepsilon_5,\varepsilon_6,\varepsilon_7,\varepsilon_8).
$$
A simple calculation shows that $N_{old}(F_0)=2.561675\dots$.
Further, we also have $N(F_0)=1.872827\dots$. Then our algorithm
in six steps yields a new system of fundamental $S$-units $F_6$
given by the transformation
$$
\left(
\begin{smallmatrix}
1 & 0 &\hfill -1 & 0 & 0 & 0 & 0 & 0\\
0 & 1 &\hfill  0 & 0 & 0 & 0 & 0 & 0\\
0 & 0 &\hfill  1 & 0 & 0 & 0 & 0 & 0\\
0 & 0 &\hfill  0 & 1 & 0 & 0 & 0 & 0\\
0 & 0 &\hfill  0 & 0 & 1 & 0 & 0 & 0\\
0 & 0 &\hfill  0 & 0 & 0 & 1 & 0 & 0\\
0 & 0 &\hfill  0 & 0 & 0 & 0 & 1 & 0\\
0 & 0 &\hfill  0 & 0 & 0 & 0 & 0 & 1
\end{smallmatrix}
\right)
\cdot
\left(
\begin{smallmatrix}
1 & 0 & 0 & 0 & 0 & 0 & 0 & 0\\
0 & 1 & 0 & 0 & 0 & 0 & 0 & 0\\
0 & 0 & 1 & 0 & 0 & 0 & 0 & 1\\
0 & 0 & 0 & 1 & 0 & 0 & 0 & 0\\
0 & 0 & 0 & 0 & 1 & 0 & 0 & 0\\
0 & 0 & 0 & 0 & 0 & 1 & 0 & 0\\
0 & 0 & 0 & 0 & 0 & 0 & 1 & 0\\
0 & 0 & 0 & 0 & 0 & 0 & 0 & 1
\end{smallmatrix}
\right)
\cdot
\left(
\begin{smallmatrix}
1 & 0 & 0 & 0 & 0 & 0 & 0 & 0\\
0 & 1 & 0 & 0 & 0 & 0 & 0 & 0\\
0 & 0 & 1 & 0 & 0 & 0 & 0 & 0\\
0 & 0 & 0 & 1 & 0 & 0 & 0 & 0\\
0 & 0 & 0 & 0 & 1 & 1 & 0 & 0\\
0 & 0 & 0 & 0 & 0 & 1 & 0 & 0\\
0 & 0 & 0 & 0 & 0 & 0 & 1 & 0\\
0 & 0 & 0 & 0 & 0 & 0 & 0 & 1
\end{smallmatrix}
\right)
\cdot
\left(
\begin{smallmatrix}
1 & 0 & 0 &\hfill  0 & 0 & 0 & 0 & 0\\
0 & 1 & 0 &\hfill  0 & 0 & 0 & 0 & 0\\
0 & 0 & 1 &\hfill  0 & 0 & 0 & 0 & 0\\
0 & 0 & 0 &\hfill  1 & 0 & 0 & 0 & 0\\
0 & 0 & 0 &\hfill  0 & 1 & 0 & 0 & 0\\
0 & 0 & 0 &\hfill  0 & 0 & 1 & 0 & 0\\
0 & 0 & 0 &\hfill  0 & 0 & 0 & 1 & 0\\
0 & 0 & 0 &\hfill -1 & 0 & 0 & 0 & 1
\end{smallmatrix}
\right)
\cdot
$$
$$
\cdot
\left(
\begin{smallmatrix}
1 & 0 & 0 & 0 & 0 & 0 & 0 & 0\\
0 & 1 & 0 & 0 & 0 & 0 & 0 & 0\\
0 & 0 & 1 & 0 & 0 & 0 & 0 & 0\\
0 & 0 & 0 & 1 & 0 & 0 & 0 & 0\\
0 & 0 & 0 & 0 & 1 & 0 & 0 & 0\\
0 & 0 & 0 & 0 & 0 & 1 & 1 & 0\\
0 & 0 & 0 & 0 & 0 & 0 & 1 & 0\\
0 & 0 & 0 & 0 & 0 & 0 & 0 & 1
\end{smallmatrix}
\right)
\cdot
\left(
\begin{smallmatrix}
\hfill 1 & 0 & 0 & 0 & 0 & 0 & 0 & 0\\
\hfill 0 & 1 & 0 & 0 & 0 & 0 & 0 & 0\\
\hfill 0 & 0 & 1 & 0 & 0 & 0 & 0 & 0\\
\hfill-1 & 0 & 1 & 1 & 1 & 1 & 1 & 1\\
\hfill 0 & 0 & 0 & 0 & 1 & 0 & 0 & 0\\
\hfill 0 & 0 & 0 & 0 & 0 & 1 & 0 & 0\\
\hfill 0 & 0 & 0 & 0 & 0 & 0 & 1 & 0\\
\hfill 0 & 0 & 0 & 0 & 0 & 0 & 0 & 1
\end{smallmatrix}
\right)
=
\left(
\begin{smallmatrix}
\hfill 0 & 0 &\hfill  0 &\hfill  1 &\hfill  1 &\hfill  1 &\hfill
1 & 0\\
\hfill 0 & 1 &\hfill  0 &\hfill  0 &\hfill  0 &\hfill  0 &\hfill
0 & 0\\
\hfill 1 & 0 &\hfill  0 &\hfill -1 &\hfill -1 &\hfill -1 &\hfill
-1 & 0\\
\hfill-1 & 0 &\hfill  1 &\hfill  1 &\hfill  1 &\hfill  1 &\hfill
1 & 1\\
\hfill 0 & 0 &\hfill  0 &\hfill  0 &\hfill  1 &\hfill  1 &\hfill
1 & 0\\
\hfill 0 & 0 &\hfill  0 &\hfill  0 &\hfill  0 &\hfill  1 &\hfill
1 & 0\\
\hfill 0 & 0 &\hfill  0 &\hfill  0 &\hfill  0 &\hfill  0 &\hfill 1
& 0\\ \hfill 1 & 0 &\hfill -1 &\hfill -1 &\hfill -1 &\hfill -1
&\hfill -1 & 0
\end{smallmatrix}
\right)
.
$$
That is, we have
$$
F_6=(\varepsilon_3\varepsilon_4^{-1}\varepsilon_8,
\varepsilon_2,\varepsilon_4\varepsilon_8^{-1},
\varepsilon_1\varepsilon_3^{-1}\varepsilon_4\varepsilon_8^{-1},
\varepsilon_1\varepsilon_3^{-1}\varepsilon_4\varepsilon_5
\varepsilon_8^{-1},
$$
$$
\varepsilon_1\varepsilon_3^{-1}\varepsilon_4\varepsilon_5
\varepsilon_6\varepsilon_8^{-1},\varepsilon_1\varepsilon_3^{-1}
\varepsilon_4\varepsilon_5\varepsilon_6\varepsilon_7
\varepsilon_8^{-1},\varepsilon_4)
$$
with $N(F_6)=1.343979\dots$. The second part of our algorithm
verifies that $F_6$ is optimal. We started the reduction with the
initial bound $B\leq 2076$ for $F_0$ (see \cite{W}), while using
the inverse of the basis reduction matrix we could start with
$B\leq 4152$ in case of $F_6$. The data and the information
derived from them are given in Table 3.

\begin{table}[htb]\centering
\vskip.15cm
\begin{tabular}{|l|l|l|l|}
\cline{1-4} \vbox to1,88ex{\vspace{1pt}\vfil\hbox to12,80ex{\hfil
\hfil}} &\vbox to1,88ex{\vspace{1pt}\vfil\hbox to20,80ex{\hfil
using $F_0$ and $N_{old}(F_0)$ \hfil}} & \vbox
to1,88ex{\vspace{1pt}\vfil\hbox to18,80ex{\hfil using $F_0$ and
$N(F_0)$ \hfil}} & \vbox to1,88ex{\vspace{1pt}\vfil\hbox
to18,80ex{\hfil using $F_6$ and $N(F_6)$ \hfil}} \\

\cline{1-4} \vbox to1,88ex{\vspace{1pt}\vfil\hbox to12,80ex{\hfil
$C^*$\hfil}} &\vbox to1,88ex{\vspace{1pt}\vfil\hbox
to20,80ex{\hfil 2.561675...\hfil}} & \vbox
to1,88ex{\vspace{1pt}\vfil\hbox to18,80ex{\hfil 1.872827...\hfil}}
& \vbox to1,88ex{\vspace{1pt}\vfil\hbox to18,80ex{\hfil
1.343979...\hfil}} \\

\cline{1-4} \vbox to1,88ex{\vspace{1pt}\vfil\hbox to12,80ex{\hfil
$C_{red}$\hfil}} &\vbox to1,88ex{\vspace{1pt}\vfil\hbox
to20,80ex{\hfil 1664\hfil}} & \vbox
to1,88ex{\vspace{1pt}\vfil\hbox to18,80ex{\hfil 1210\hfil}} &
\vbox to1,88ex{\vspace{1pt}\vfil\hbox to18,80ex{\hfil 824\hfil}}
\\

\cline{1-4} \vbox to1,88ex{\vspace{1pt}\vfil\hbox to12,80ex{\hfil
$C^*$ ratio\hfil}} &\vbox to1,88ex{\vspace{1pt}\vfil\hbox
to20,80ex{\hfil 1\hfil}} & \vbox to1,88ex{\vspace{1pt}\vfil\hbox
to18,80ex{\hfil 0.731094...\hfil}} & \vbox
to1,88ex{\vspace{1pt}\vfil\hbox to18,80ex{\hfil 0.524648...\hfil}} \\

\cline{1-4} \vbox to1,88ex{\vspace{1pt}\vfil\hbox to12,80ex{\hfil
$C_{red}$ ratio\hfil}} &\vbox to1,88ex{\vspace{1pt}\vfil\hbox
to20,80ex{\hfil 1\hfil}} & \vbox to1,88ex{\vspace{1pt}\vfil\hbox
to18,80ex{\hfil 0.727163...\hfil}} & \vbox
to1,88ex{\vspace{1pt}\vfil\hbox to18,80ex{\hfil 0.495192...\hfil}} \\

\cline{1-4} \vbox to1,88ex{\vspace{1pt}\vfil\hbox to12,80ex{\hfil
domain ratio\hfil}} &\vbox to1,88ex{\vspace{1pt}\vfil\hbox
to20,80ex{\hfil 1\hfil}} & \vbox to1,88ex{\vspace{1pt}\vfil\hbox
to18,80ex{\hfil 0.006122...\hfil}} & \vbox
to1,88ex{\vspace{1pt}\vfil\hbox to18,80ex{\hfil 0.000013...\hfil}} \\

\cline{1-4}
\end{tabular}
\vskip.2cm
\centerline{Table 3}
\end{table}

\vskip.2cm

\noindent{\bf Example 4.} This example is new. Let $\K=\Q(\vartheta)$
where
$$
\vartheta^{18}+\vartheta^{17}+\vartheta^{16}+\vartheta^{15}+
\vartheta^{14}+\vartheta^{13}+\vartheta^{12}+\vartheta^{11}+
\vartheta^{10}+\vartheta^9+
$$
$$
+\vartheta^8+\vartheta^7+\vartheta^6+\vartheta^5+\vartheta^4+
\vartheta^3+\vartheta^2+\vartheta+1=0,
$$
so $\K$ is the cyclotomic field $\Q(\zeta_{19})$ (with
$\zeta_{19}=\vartheta$). Let $S$ be the set of infinite valuations
of $\K$. An integral basis of $\K$ is given by
$1,\vartheta,\dots,\vartheta^{17}$. The coordinates of a
fundamental system $\varepsilon_1,\dots,\varepsilon_8$ of
$S$-units with respect to this integral basis (obtained by Magma
\cite{BCP}) is given by
$$
[1, 1, 1, 1, 1, 1, 1, 1, 1, 0, 1, 1, 1, 1, 1, 1, 1, 1],
$$
$$
[0, 0, 0, 1, 0, 0, 0, 0, 0, 0, 1, 0, 0, 0, 0, 0, 0, 0],
$$
$$
[1, 1, 1, 1, 1, 1, 1, 1, 1, 1, 1, 1, 1, 1, 1, 0, 1, 1],
$$
$$
[1, 1, 1, 0, 1, 1, 1, 1, 1, 1, 1, 1, 1, 1, 1, 1, 1, 1],
$$
$$
[1, 0, 1, 1, 1, 1, 1, 1, 1, 1, 1, 1, 1, 1, 1, 1, 1, 1],
$$
$$
[1, 1, 1, 1, 1, 1, 1, 1, 1, 1, 1, 1, 1, 0, 1, 1, 1, 1],
$$
$$
[1, 1, 1, 1, 1, 1, 1, 1, 1, 1, 0, 1, 1, 1, 1, 1, 1, 1],
$$
$$
[0, 0, 0, 0, 0, 0, 0, 1, 0, 0, 0, 0, 0, 1, 0, 0, 0, 0],
$$
respectively. Put
$$
F_0=(\varepsilon_1,\varepsilon_2,\varepsilon_3,\varepsilon_4,
\varepsilon_5,\varepsilon_6,\varepsilon_7,\varepsilon_8).
$$
A simple calculation shows that $N_{old}(F_0)=1.280834\dots$ and
$N(F_0)=0.936410\dots$. Then the above algorithm in seven steps
provides a new system of fundamental $S$-units $F_7$ given by the
transformation
$$
\left(
\begin{smallmatrix}
1 & 0 & 0 & 0 & 0 & 0 & 0 & 0\\
0 & 1 & 0 & 0 & 0 & 0 & 0 & 0\\
0 & 0 & 1 & 0 & 0 & 0 & 0 & 1\\
0 & 0 & 0 & 1 & 0 & 0 & 0 & 0\\
0 & 0 & 0 & 0 & 1 & 0 & 0 & 0\\
0 & 0 & 0 & 0 & 0 & 1 & 0 & 0\\
0 & 0 & 0 & 0 & 0 & 0 & 1 & 0\\
0 & 0 & 0 & 0 & 0 & 0 & 0 & 1
\end{smallmatrix}
\right)
\cdot
\left(
\begin{smallmatrix}
1 & 0 &\hfill  0 & 0 & 0 & 0 & 0 & 0\\
0 & 1 &\hfill  0 & 0 & 0 & 0 & 0 & 0\\
0 & 0 &\hfill  1 & 0 & 0 & 0 & 0 & 0\\
0 & 0 &\hfill  0 & 1 & 0 & 0 & 0 & 0\\
0 & 0 &\hfill  0 & 0 & 1 & 0 & 0 & 0\\
0 & 0 &\hfill  0 & 0 & 0 & 1 & 0 & 0\\
0 & 0 &\hfill  0 & 0 & 0 & 0 & 1 & 0\\
0 & 0 &\hfill -1 & 0 & 0 & 0 & 1 & 1
\end{smallmatrix}
\right)
\cdot
\left(
\begin{smallmatrix}
1 & 0 & 0 & 0 & 0 & 0 & 0 & 0\\
0 & 1 & 0 & 0 & 0 & 0 & 0 & 0\\
0 & 0 & 1 & 0 & 0 & 0 & 0 & 0\\
0 & 0 & 0 & 1 & 0 & 0 & 0 & 0\\
0 & 0 & 0 & 0 & 1 & 0 & 0 & 0\\
0 & 0 & 0 & 0 & 0 & 1 & 0 & 0\\
0 & 0 & 0 & 1 & 0 & 0 & 1 & 0\\
0 & 0 & 0 & 0 & 0 & 0 & 0 & 1
\end{smallmatrix}
\right)
\cdot
\left(
\begin{smallmatrix}
1 & 0 &\hfill  0 & 0 & 0 & 0 & 0 & 0\\
0 & 1 &\hfill -1 & 1 & 0 & 0 & 1 & 1\\
0 & 0 &\hfill  1 & 0 & 0 & 0 & 0 & 0\\
0 & 0 &\hfill  0 & 1 & 0 & 0 & 0 & 0\\
0 & 0 &\hfill  0 & 0 & 1 & 0 & 0 & 0\\
0 & 0 &\hfill  0 & 0 & 0 & 1 & 0 & 0\\
0 & 0 &\hfill  0 & 0 & 0 & 0 & 1 & 0\\
0 & 0 &\hfill  0 & 0 & 0 & 0 & 0 & 1
\end{smallmatrix}
\right)
\cdot
$$
$$
\cdot
\left(
\begin{smallmatrix}
1 & 0 & 0 & 0 & 0 & 0 & 0 & 0\\
0 & 1 & 0 & 0 & 0 & 0 & 0 & 0\\
0 & 0 & 1 & 0 & 0 & 0 & 0 & 0\\
0 & 0 & 0 & 1 & 1 & 0 & 0 & 0\\
0 & 0 & 0 & 0 & 1 & 0 & 0 & 0\\
0 & 0 & 0 & 0 & 0 & 1 & 0 & 0\\
0 & 0 & 0 & 0 & 0 & 0 & 1 & 0\\
0 & 0 & 0 & 0 & 0 & 0 & 0 & 1
\end{smallmatrix}
\right)
\cdot
\left(
\begin{smallmatrix}
1 & 0 & 0 & 0 & 0 & 0 & 0 & 0\\
0 & 1 & 0 & 0 & 0 & 0 & 0 & 0\\
0 & 0 & 1 & 0 & 0 & 0 & 0 & 0\\
0 & 0 & 0 & 1 & 0 & 0 & 0 & 0\\
0 & 0 & 0 & 0 & 1 & 0 & 0 & 0\\
1 & 0 & 0 & 0 & 0 & 1 & 0 & 0\\
0 & 0 & 0 & 0 & 0 & 0 & 1 & 0\\
0 & 0 & 0 & 0 & 0 & 0 & 0 & 1
\end{smallmatrix}
\right)
\cdot
\left(
\begin{smallmatrix}
1 &\hfill  0 & 0 &\hfill  0 & 0 & 0 &\hfill  0 &\hfill  0\\
0 &\hfill  1 & 0 &\hfill  0 & 0 & 0 &\hfill  0 &\hfill  0\\
0 &\hfill  0 & 1 &\hfill  0 & 0 & 0 &\hfill  0 &\hfill  0\\
0 &\hfill  0 & 0 &\hfill  1 & 0 & 0 &\hfill  0 &\hfill  0\\
1 &\hfill -1 & 1 &\hfill -1 & 1 & 1 &\hfill -1 &\hfill -1\\
0 &\hfill  0 & 0 &\hfill  0 & 0 & 1 &\hfill  0 &\hfill  0\\
0 &\hfill  0 & 0 &\hfill  0 & 0 & 0 &\hfill  1 &\hfill  0\\
0 &\hfill  0 & 0 &\hfill  0 & 0 & 0 &\hfill  0 &\hfill  1
\end{smallmatrix}
\right)
=
$$
$$
=
\left(
\begin{smallmatrix}
1 &\hfill  0 & 0 &\hfill  0 & 0 & 0 &\hfill  0 &\hfill  0\\
1 &\hfill  0 & 0 &\hfill  0 & 1 & 1 &\hfill  0 &\hfill  0\\
1 &\hfill -1 & 1 &\hfill  0 & 1 & 1 &\hfill  0 &\hfill  0\\
1 &\hfill -1 & 1 &\hfill  0 & 1 & 1 &\hfill -1 &\hfill -1\\
1 &\hfill -1 & 1 &\hfill -1 & 1 & 1 &\hfill -1 &\hfill -1\\
1 &\hfill  0 & 0 &\hfill  0 & 0 & 1 &\hfill  0 &\hfill  0\\
1 &\hfill -1 & 1 &\hfill  0 & 1 & 1 &\hfill  0 &\hfill -1\\
1 &\hfill -1 & 0 &\hfill  0 & 1 & 1 &\hfill  0 &\hfill  0
\end{smallmatrix}
\right)
.
$$
Actually, we have
$$
F_7=(\varepsilon_1\varepsilon_2\varepsilon_3\varepsilon_4
\varepsilon_5\varepsilon_6\varepsilon_7\varepsilon_8,
\varepsilon_3^{-1}\varepsilon_4^{-1}\varepsilon_5^{-1}
\varepsilon_7^{-1}\varepsilon_8^{-1},\varepsilon_3
\varepsilon_4\varepsilon_5\varepsilon_7,\varepsilon_5^{-1},
$$
$$
\varepsilon_2\varepsilon_3\varepsilon_4\varepsilon_5
\varepsilon_7\varepsilon_8,\varepsilon_2\varepsilon_3
\varepsilon_4\varepsilon_5\varepsilon_6\varepsilon_7
\varepsilon_8,\varepsilon_4^{-1}\varepsilon_5^{-1},
\varepsilon_4^{-1}\varepsilon_5^{-1}\varepsilon_7^{-1})
$$
with $N(F_7)=0.67198843\dots$. By the second part of our algorithm
we get that $F_7$ is an optimal system of fundamental $S$-units.
To execute the reduction, we used the initial bound $B<10000$ both
with $F_0$ and with $F_7$. The outcome of our calculations is
contained in Table 4.

\begin{table}[htb]\centering
\vskip.15cm
\begin{tabular}{|l|l|l|l|}
\cline{1-4} \vbox to1,88ex{\vspace{1pt}\vfil\hbox to12,80ex{\hfil
\hfil}} &\vbox to1,88ex{\vspace{1pt}\vfil\hbox to20,80ex{\hfil
using $F_0$ and $N_{old}(F_0)$ \hfil}} & \vbox
to1,88ex{\vspace{1pt}\vfil\hbox to18,80ex{\hfil using $F_0$ and
$N(F_0)$ \hfil}} & \vbox to1,88ex{\vspace{1pt}\vfil\hbox
to18,80ex{\hfil using $F_7$ and $N(F_7)$ \hfil}} \\

\cline{1-4} \vbox to1,88ex{\vspace{1pt}\vfil\hbox to12,80ex{\hfil
$C^*$\hfil}} &\vbox to1,88ex{\vspace{1pt}\vfil\hbox
to20,80ex{\hfil 1.280834...\hfil}} & \vbox
to1,88ex{\vspace{1pt}\vfil\hbox to18,80ex{\hfil 0.936410...\hfil}}
& \vbox to1,88ex{\vspace{1pt}\vfil\hbox to18,80ex{\hfil
0.671988...\hfil}} \\

\cline{1-4} \vbox to1,88ex{\vspace{1pt}\vfil\hbox to12,80ex{\hfil
$C_{red}$\hfil}} &\vbox to1,88ex{\vspace{1pt}\vfil\hbox
to20,80ex{\hfil 792\hfil}} & \vbox to1,88ex{\vspace{1pt}\vfil\hbox
to18,80ex{\hfil 550\hfil}} & \vbox to1,88ex{\vspace{1pt}\vfil\hbox
to18,80ex{\hfil 386\hfil}} \\

\cline{1-4} \vbox to1,88ex{\vspace{1pt}\vfil\hbox to12,80ex{\hfil
$C^*$ ratio\hfil}} &\vbox to1,88ex{\vspace{1pt}\vfil\hbox
to20,80ex{\hfil 1\hfil}} & \vbox to1,88ex{\vspace{1pt}\vfil\hbox
to18,80ex{\hfil 0.731094...\hfil}} & \vbox
to1,88ex{\vspace{1pt}\vfil\hbox to18,80ex{\hfil 0.524649...\hfil}}
\\

\cline{1-4} \vbox to1,88ex{\vspace{1pt}\vfil\hbox to12,80ex{\hfil
$C_{red}$ ratio\hfil}} &\vbox to1,88ex{\vspace{1pt}\vfil\hbox
to20,80ex{\hfil 1\hfil}} & \vbox to1,88ex{\vspace{1pt}\vfil\hbox
to18,80ex{\hfil 0.694444...\hfil}} & \vbox
to1,88ex{\vspace{1pt}\vfil\hbox to18,80ex{\hfil 0.487373...\hfil}}
\\

\cline{1-4} \vbox to1,88ex{\vspace{1pt}\vfil\hbox to12,80ex{\hfil
domain ratio\hfil}} &\vbox to1,88ex{\vspace{1pt}\vfil\hbox
to20,80ex{\hfil 1\hfil}} & \vbox to1,88ex{\vspace{1pt}\vfil\hbox
to18,80ex{\hfil 0.002938...\hfil}} & \vbox
to1,88ex{\vspace{1pt}\vfil\hbox to18,80ex{\hfil 0.000010...\hfil}} \\

\cline{1-4}
\end{tabular}
\vskip.2cm
\centerline{Table 4}
\end{table}

\section{Proofs}

In this section we give the proofs of our results.

\begin{proof}[Proof of Proposition \ref{p1}.]
Noting that the rank of $R_F$ is $s-1$ and that by the product
formula \eqref{pf} we have $\sum\limits_{j=1}^s
\log|\varepsilon_i|_{v_j}=0$ for each $i\in\{1,\dots,s-1\}$, the
statement follows from the elementary theory of systems of linear
equations.
\end{proof}

\begin{proof}[Proof of Proposition \ref{p2}.]
In view of Proposition \ref{p1}, the only freedom we have in the
choice of $R_F'$ is to choose $u_1,\dots,u_{s-1}$. Take any
$i\in\{1,\dots,s-1\}$, and let $z_1,\dots,z_s$ be the
rearrangement of the entries of $\w_i$ such that
$z_1\leq\dots\leq z_s$. Writing $w_{i,s}=0$, it is obvious that
$$
\min\limits_{u_i\in\R}\sum\limits_{j=1}^s |w_{i,j}-u_i|
$$
is achieved by the choice $u_i=z_l$, where
$l=\lfloor(s+1)/2\rfloor$. Hence the statement follows.
\end{proof}

\begin{proof}[Proof of Theorem \ref{t1}.]
Let $T=(\eta_1,\dots,\eta_{s-1})$ be an arbitrary system of
fundamental $S$-units, and suppose that for the system $F$ we have
$N(F)\leq c$. Then we have $R_F=R_T A$ with some integral
unimodular matrix $A$ of size $(s-1)\times (s-1)$. Obviously, it
is sufficient to "bound" $A$. Observe that in view of Proposition
\ref{p2}, $N(T)$ and $N(F)$ are exclusively ruled by
$$
W_T:=
\begin{pmatrix}
\w_1^{(T)}\\
\vdots\\
\w_{s-1}^{(T)}
\end{pmatrix}
\ \ \
\mbox{and}
\ \ \
W_F:=
\begin{pmatrix}
\w_1^{(F)}\\
\vdots\\
\w_{s-1}^{(F)}
\end{pmatrix},
$$
respectively (with the obvious notation). Since $W_F=A^{-1}W_T$,
$W_T$ and $W_F$ are bases of the same lattice. As the last entry
of each $\w_i^{(F)}$ is zero, the restriction $|\w_i^{(F)}|_C\leq
c$ yields that the absolute values of the components of
$\w_i^{(F)}$ are all at most $c$. That is, $W_F$ is a basis of a
fixed lattice, within a bounded region, and our statement follows.
\end{proof}

\begin{proof}[Proof of Lemma \ref{l1}.]
The statement is a trivial consequence of the definition
\eqref{cnorm} and of the fact that (using the notation above
\eqref{cnorm}) by symmetry
$$
\vert\x\vert_C=\sum\limits_{i=1}^n |y_{l'}-y_i|
$$
holds, where $l'=\lfloor(n+2)/2\rfloor$. \qed
\end{proof}

\begin{proof}[Proof of Lemma \ref{l2}.]
First we prove that any convex linear combination of the vectors
in $T$ belongs to $H$. For this purpose, choose arbitrary
non-negative real numbers
$\lambda_0^+,\lambda_0^-,\lambda_1^+,\lambda_1^-,\dots,
\lambda_{n-1}^+,\lambda_{n-1}^-$ such that
$\sum\limits_{i=0}^{n-1} (\lambda_i^+ + \lambda_i^-) = 1$. Put
$$
\x=\sum\limits_{i=0}^{n-1} (\lambda_i^+ \ee_i + \lambda_i^-
(-\ee_i)).
$$
Then for each $i\in\{1,\dots,n-1\}$ the $i$-th entry $x_i$ of $\x$
is given by $x_i=\lambda_0^+ - \lambda_0^- + \lambda_i^+ -
\lambda_i^-$, while $x_n=0$. Hence clearly,
$$
\vert\x\vert_C= \vert(\lambda_1^+ -
\lambda_1^-,\dots,\lambda_{n-1}^+ - \lambda_{n-1}^-,\lambda_0^- -
\lambda_0^+)\vert_C.
$$
Thus
$$
\vert\x\vert_C\leq \sum\limits_{i=0}^{n-1}|\lambda_i^+
-\lambda_i^-|\leq \sum\limits_{i=0}^{n-1}(\lambda_i^+
+\lambda_i^-)=1,
$$
and the statement follows.

Let now $\x\in H$ be arbitrary, and write $\x=(x_1,\dots,x_n)$.
Let $y_1,\dots,y_n$ be a rearrangement of $x_1,\dots,x_n$ such
that $y_1\leq\dots\leq y_n$ and write $x_i=y_{\nu(i)}$, where
$\nu$ is the underlying permutation of the indices $1,\dots,n$.
(Note that $y_{\nu(n)}=x_n=0$.) As usual, put
$l=\lfloor(n+1)/2\rfloor$, for $i=1,\dots,n-1$ write
$\lambda_i^+=\max\{y_{\nu(i)}-y_l,0\}$ and
$\lambda_i^-=\max\{y_l-y_{\nu(i)},0\}$, and set
$\lambda_0^+=\max\{y_l,0\}$ and $\lambda_0^-=\max\{-y_l,0\}$. Then
on the one hand, $x_i=(y_{\nu(i)}-y_l)+y_l$ implies
$$
\x=\sum\limits_{i=0}^{n-1} (\lambda_i^+\ee_i + \lambda_i^-
(-\ee_i)).
$$
Further, on the other hand
$$
\sum\limits_{i=0}^{n-1} (\lambda_i^+ + \lambda_i^-)
=\sum\limits_{i=1}^n |y_l-y_{\nu(i)}|=\vert\x\vert_C\leq 1
$$
as $\x\in H$, and the lemma follows.
\end{proof}

\begin{proof}[Proof of Lemma \ref{l3}.]
Let ${\mathbf a}_i$ be the $i$-th row of $A$ $(i=1,\dots,s-1)$.
Using Proposition \ref{p1} without loss of generality we may
assume that all entries of the last column of $R_F'$ are zero.
Hence by $||(A R_F')^*||<N(F)$ Lemmas \ref{l1} and \ref{l2} imply
that ${\mathbf a}_i R_F'$ belongs to the convex hull of the set
$$
\{\pm N(F)\ee_0,\pm N(F)\ee_1,\dots,\pm N(F)\ee_{s-1}\}
$$
(where the $\ee_i$ are defined in Lemma \ref{l2}). However, then
using $R_F'\cdot R_F=E_{(s-1)\times (s-1)}$ we get that ${\mathbf
a}_i$ belongs to the convex hull of the set
$$
\{\pm N(F){\mathbf b}_0,\pm N(F){\mathbf b}_1,\dots, \pm
N(F){\mathbf b}_{s-1}\},
$$
where ${\mathbf b}_0={\mathbf b}_1+\dots+{\mathbf b}_{s-1}$. Since
by the product formula \eqref{pf} we have ${\mathbf b}_s=-{\mathbf
b}_0$, the lemma follows.
\end{proof}

\section{Effects on the method of Wildanger and Smart}
\label{wildsect}

In this section we discuss about the effects of the results in
the paper on the method of Wildanger \cite{W} and Smart \cite{Sm2}
for the resolution of $S$-unit equations. For this purpose, first
we briefly and schematically outline the main steps of the method.
After that we show that our results yield certain improvements of
the method. At this point the author would like to express his deep
thanks to Attila Peth\H o for the many fruitful and stimulating
discussions and advice about the contents of this section.

\subsection{The method of Wildanger and Smart}

Now we briefly sketch the method worked out by Wildanger \cite{W}
and Smart \cite{Sm2} for the resolution of $S$-unit equations. We
follow the presentation in \cite{Sm2}, with certain simplifications.
First we need to introduce some notation.

Let $\K$ and $S$ be as before. For $K\in \R$ with $K>1$ write
$$
\langle\langle K,S \rangle\rangle= \{\alpha\in\K:1/K\leq
|\alpha|_v\leq K\ \text{for all}\ v\in S\}.
$$
Let ${\mathcal L}$ denote the set of solutions of the $S$-unit
equation \eqref{sueq}, that is
$$
{\mathcal L}=\{(x_1,x_2)\in U_S\times U_S:\alpha_1 x_1 + \alpha_2
x_2 = 1\}.
$$
Further, put
$$
{\mathcal L}_{H_i}=\{(x_1,x_2)\in {\mathcal L}:B\leq H_i\}
$$
where $B$ is defined after \eqref{xiexpand}, and set
$$
{\mathcal L}_{H_i}(K)=\{(x_1,x_2)\in {\mathcal
L}_{H_i}:x_1\in\langle\langle K,S\rangle\rangle\}.
$$
Starting from \eqref{sueq} after making the standard steps (see
section 1.2) we arrive at \eqref{matrixeq}. Write ${\mathcal
E}_1,\dots,{\mathcal E}_{s-1}$ for the columns of the matrix at
the left hand side of \eqref{matrixeq}, and ${\mathcal X}$ for the
vector on the right hand side. Then \eqref{matrixeq} can be
rewritten as
\begin{equation}
\label{vecteq}
b_{1,1}{\mathcal E}_1+\dots+b_{1,s-1}{\mathcal E}_{s-1}=
{\mathcal X}.
\end{equation}

As earlier, denote by $C_{red}$ the reduced bound obtained for
$B$ (defined after \eqref{xiexpand}) after executing Baker's method
and the LLL-algorithm. Put
\begin{equation}
\label{w1} K_0 = \max\limits_{v\in S} \exp(C_{red}|\log
|\varepsilon_1|_v|+\dots+ C_{red}|\log |\varepsilon_{s-1}|_v|).
\end{equation}
Then by the help of \eqref{vecteq} one can easily get that
\begin{equation}
\label{k0ineq} {\mathcal L}={\mathcal L}_{H_0}(K_0)
\end{equation}
with $H_0=C_{red}$ (see Lemma 1 of \cite{Sm2}).

Put
$$
s_i=\max\limits_{v\in S}\max(|\alpha_i|_v,|\alpha_i^{-1}|_v)\
\text{for}\ i=1,2
$$
and
$$
s_3=\max\limits_{v\in S}\min(|\alpha_2^{-1}|_v).
$$
Now let $K_i,K_{i+1}$ be real numbers such that
$\max(s_1,s_2,s_3,(s_3-1)/s_1)<K_{i+1}<K_i$ and let $H_i\in\Z$.
Note that we have $K_{i+1}>1$. For $v\in S$ define the sets
$$
T_{1,v}(H_i,K_i,K_{i+1})=\left\{ (x_1,x_2)\in {\mathcal
L}_{H_i}(K_i):|-\alpha_1 x_1-1|_v<{\frac{1}{1+s_1 K_{i+1}}}
\right\},
$$
$$
T_{2,v}(H_i,K_i,K_{i+1})=\left\{ (x_1,x_2)\in {\mathcal
L}_{H_i}(K_i):\left|-{\frac{1}{\alpha_1 x_1}}-1\right|_v<
{\frac{1}{1+s_1K_{i+1}}} \right\},
$$
\begin{multline*}
T_{3,v}(H_i,K_i,K_{i+1})=\{(x_1,x_2)\in {\mathcal
L}_{H_i}(K_i):|-\alpha_2 x_2-1|_v<{\frac{s_1}{K_{i+1}}},\\
\alpha_2 x_2\in \langle\langle 1+s_1K_i,S\rangle\rangle\},
\end{multline*}
\begin{multline*}
T_{4,v}(H_i,K_i,K_{i+1})=\{ (x_1,x_2)\in {\mathcal
L}_{H_i}(K_i):\left|-{\frac{\alpha_2 x_2}{\alpha_1
x_1}}-1\right|_v<{\frac{s_1}{K_{i+1}}},\\
{\frac{\alpha_2 x_2}{\alpha_1 x_1}}\in
\langle\langle 1+s_1K_i,S\rangle\rangle\}.
\end{multline*}
Further, for $j=1,2,3,4$ let
$$
T_j(H_i,K_i,K_{i+1})=\bigcup\limits_{v\in S}
T_{j,v}(H_i,K_i,K_{i+1}).
$$
Then by Lemma 2 of \cite{Sm2} we have the decomposition
\begin{equation}
\label{decomp}
{\mathcal L}_{H_i}(K_i)={\mathcal
L}_{H_{i+1}}(K_{i+1}) \bigcup\limits_{j=1}^4 T_j(H_i,K_i,K_{i+1}),
\end{equation}
with
\begin{equation}
\label{newhi}
H_{i+1}=C^*\cdot C^+
\end{equation}
where $C^*$ is the crucial constant investigated in the paper (see
\eqref{mainineq}) and
\begin{equation}
\label{cplus} C^+=\max\left(
\log\left({\frac{s_1K_{i+1}+1}{s_2}}\right),
\log\left({\frac{s_1K_{i+1}+1}{s_3}}\right),
\log(K_{i+1})\right).
\end{equation}
For more details and explanation see \cite{W} and \cite{Sm2}. Now
what happens, is that under certain conditions (which usually hold
in the first part of the algorithm; see Lemma 3 of \cite{Sm2}) the
set $\bigcup\limits_{j=1}^4 T_j(H_i,K_i,K_{i+1})$ occurring in
\eqref{decomp} proves to be empty. This is not the case in the
later part of the algorithm. However, at that stage one can rather
efficiently use the method of Fincke and Pohst \cite{FP} to
completely enumerate this set. Hence starting from $i=0$, by the
repeated application of \eqref{decomp} the whole procedure can be
iterated. Finally we are left only with a set
${\mathcal L}_{H_j}(K_j)$ for some small values of $K_j$ and $H_j$,
which can also be enumerated without any trouble. Hence in this
way we can get all solutions of the original $S$-unit equation
\eqref{sueq}.

\subsection{Some improvements}

In this subsection we indicate at which points our results may
improve upon the above described method of Wildanger and Smart.

By our method presented in the paper we are able to minimize
the value of $C_{red}$. Hence the initial value of $K_0$ in
\eqref{w1} can be taken much smaller than previously. In
particular, one may even use his/her original fundamental system
$F_0$ of units. Working with the value $N(F_0)$ defined in the
paper instead of $N_{old}(F_0)$ used earlier, we get a better
$C_{red}$ than previously. As in this case all the other
parameters in \eqref{w1} are unchanged, we already obtain some
improvement.

Furthermore, in the definition \eqref{newhi} of $H_{i+1}$ the
constant $C^*$ is used. As by our results one can take a much
smaller value for this parameter than earlier, in each iteration
of the algorithm we get a smaller value for $H_{i+1}$, and hence
the procedure can be made to "converge" faster. Here the above
remark applies again: using the original non-optimized system
$F_0$, but working with the new norm $N(F_0)$, one already gets
some improvement.

\section{Appendix}

This final section has two distinct parts. In the first subsection
we provide the reduction lemmas we used to obtain the reduced
bounds $C_{red}$ for $B$, both in the finite and in the infinite
case. In the second subsection we show how one can adjust the
method for other choices of the valuations. (We give our
motivation for doing so, as well.)

\subsection{Reduction}
\label{redu}

There are very many variants of reduction lemmas, both in the
finite and in the infinite case; we chose a lemma of Smart
\cite{Sm1} in the $p$-adic case, and a result of Ga\'al and Pohst
\cite{GP} in the complex case. Note that these lemmas have to be
applied for each possible choice of the valuation $v\in S$, and
the final reduced bound will be the maximum of the bounds obtained
for each separate $v$.

\subsubsection{The $p$-adic case}
To execute the reduction, in the $p$-adic case we use a lemma of
Smart \cite{Sm1}, which is based upon results of de Weger
\cite{deW}. For its formulation we need some preparation, in which
we follow the presentation in \cite{Sm1} with slight
modifications. (For more details see \cite{Sm1}.)

Let $P$ be a prime ideal in $\K$, corresponding to a finite
valuation $v$ in $S$. Suppose that $P$ lies above the rational
prime $p$, and suppose that $\mbox{ord}_p(\Lambda)\geq C_1B-C_2$
with some constants $C_1>0$ and $C_2$, where
$\Lambda=\alpha_1x_1=1-\alpha_2 x_2$ in \eqref{sueq}. Now assuming
that $B$ is not too small (otherwise we can derive a much better
bound for $B$ than with the reduction), we can find $\mu_i\in\K$
$(i=0,1,\dots,s'-1)$ with $s'=s$ or $s-1$, such that
$$
\alpha_2 x_2 = \mu_0 \prod\limits_{i=1}^{s'} \mu_i ^{k_i}
$$
where $k_i\in\Z$ with $|k_i|\leq B$ $(i=1,\dots,s')$. Note that
these $\mu_i$ can actually be found (see \cite{Sm1}).

Let $\Q_p$ and $\K_P$ denote the $p$-adic closure of $\Q$ and the
$P$-adic closure of $\K$, respectively. Then we can write
$\K_P=\Q_p(\phi)$ with some $\phi\in\K_p$; put $n_0:=[\K_P:\Q_p]$.
Further, set
$$
\Delta=\log_p \mu_0 + \sum\limits_{i=1}^{s'} k_i\log_p \mu_i \in \K_P.
$$
Then we can write
$$
\Delta=\sum\limits_{i=0}^{n_0-1} \Delta_i \phi^i
$$
where
$$
\Delta_i=\beta_{0,i} + \sum\limits_{j=1}^{s'} k_j\beta_{j,i}
$$
with the corresponding $\beta_{ji}\in\Q_p$ $(i=0,1,\dots,n_0-1)$.
Let $\lambda\in\Q_p$ such that
$$
\mbox{ord}_p(\lambda)=\min\limits_{1\leq j\leq s'}\left(
\min\limits_{0\leq i\leq n_0-1}(\mbox{ord}_p(\beta_{j,i}))
\right).
$$
Then (assuming again without loss of generality that $B$ is "not
too small") by a simple calculation, including Evertse's trick
(see \cite{TW} and \cite{Sm1}) we get that
$$
\mbox{ord}_p(\Delta_i/\lambda)\geq C_1B-C_3\ \ \
(i=0,1,\dots,n_0-1).
$$
Here $C_3$ is a constant which can be explicitly given in terms of
$C_2,\lambda,\phi$. Write
$$
\Delta_i/\lambda=\kappa_{0,i} + \sum\limits_{j=1}^{s'}
k_j\kappa_{j,i},\ \ \ \kappa_{j,i}\in\Z_p,\ 0\leq i\leq n_0-1
$$
with the obvious notation.

For $\gamma\in\Z_p$ and $u\in\Z$ let $\gamma^{(u)}$ denote the
unique rational integer such that $0\leq \gamma^{(u)}\leq p^u-1$
and $\gamma\equiv\gamma^{(u)} \pmod{p^u}$. Further, for $u\in\Z$
set
$$
L=
\begin{pmatrix}
1                      &        &                         &     &
& 0   \\
                       & \ddots &                         &     &
                       &
                       \\
0                      &        & 1                       &     &
&     \\
\kappa_{1,0}^{(u)}     & \dots & \kappa_{s',0}^{(u)}     & p^u &
& 0   \\
\vdots                 &        & \vdots                  &     &
\ddots &     \\ \kappa_{1,n_0-1}^{(u)} & \dots &
\kappa_{s',n_0-1}^{(u)} & 0   &        & p^u
\end{pmatrix}
\in \Z^{(s'+n_0)\times (s'+n_0)}
$$
and
$$
\underline{y}=
\begin{pmatrix}
0\\
\vdots\\
0\\
-\kappa_{0,0}^{(u)}\\
\vdots\\
-\kappa_{0,n_0-1}^{(u)}
\end{pmatrix}
\in \Z^{s'+n_0}
.
$$
Let $\mathcal{L}$ denote the lattice generated by the column
vectors of $L$ over $\Z$, and set
$$
\ell(\mathcal{L},\underline{y})=
\begin{cases}
\min\limits_{\underline{x}\in\mathcal{L},\underline{x}\neq
\underline{0}} ||\underline{x}||& \mbox{ if }
\underline{y}=\underline{0},\\
\min\limits_{\underline{x}\in\mathcal{L}} ||\underline{x}||&\mbox{
otherwise.}
\end{cases}
$$

The following result is Lemma 5 in \cite{Sm1}. Note that the
statement is in fact a consequence of Lemmas 3.4, 3.5 and 3.6 of
\cite{deW}.

\begin{lemma}
\label{redpadic} Using the previous notation, suppose that
$\mbox{ord}_p(\Lambda)\geq C_1B-C_2$ with $B\leq X_0$. Then
$\ell(\mathcal{L},\underline{y})>\sqrt{s'}X_0$ implies that
$B<(u+C_3)/C_1$.
\end{lemma}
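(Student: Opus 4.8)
The plan is to argue by contraposition: assuming $B\geq (u+C_3)/C_1$, I would produce a vector of the lattice $\mathcal{L}$ that forces $\ell(\mathcal{L},\underline{y})\leq\sqrt{s'}X_0$, contradicting the hypothesis $\ell(\mathcal{L},\underline{y})>\sqrt{s'}X_0$ and hence yielding $B<(u+C_3)/C_1$.

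The starting point is the estimate $\mathrm{ord}_p(\Delta_i/\lambda)\geq C_1B-C_3$ $(i=0,1,\dots,n_0-1)$ obtained just before the statement, under the running assumption that $B$ is not too small. If $B\geq (u+C_3)/C_1$, then $C_1B-C_3\geq u$, so $\mathrm{ord}_p(\Delta_i/\lambda)\geq u$, i.e. $\Delta_i/\lambda\equiv 0\pmod{p^u}$ in $\Z_p$, for every $i$. Using $\Delta_i/\lambda=\kappa_{0,i}+\sum_{j=1}^{s'}k_j\kappa_{j,i}$ and replacing each $p$-adic integer $\kappa_{j,i}$ by the rational integer $\kappa_{j,i}^{(u)}\in\{0,\dots,p^u-1\}$ congruent to it modulo $p^u$ (the $k_j$ being integers), I would rewrite this as the system
$$
\kappa_{0,i}^{(u)}+\sum_{j=1}^{s'}k_j\kappa_{j,i}^{(u)}\equiv 0\pmod{p^u}\qquad(i=0,1,\dots,n_0-1),
$$
so there are integers $t_0,\dots,t_{n_0-1}$ with $\kappa_{0,i}^{(u)}+\sum_{j=1}^{s'}k_j\kappa_{j,i}^{(u)}+p^ut_i=0$ for all $i$.

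Then I would take $\underline{x}:=L\,(k_1,\dots,k_{s'},t_0,\dots,t_{n_0-1})^{T}\in\mathcal{L}$. By the block form of $L$, the first $s'$ coordinates of $\underline{x}$ are $k_1,\dots,k_{s'}$, while its last $n_0$ coordinates equal $\sum_{j=1}^{s'}k_j\kappa_{j,i}^{(u)}+p^ut_i=-\kappa_{0,i}^{(u)}$, which are exactly the last $n_0$ coordinates of $\underline{y}$. Hence $\underline{x}-\underline{y}=(k_1,\dots,k_{s'},0,\dots,0)$, and since $|k_j|\leq B\leq X_0$,
$$
\|\underline{x}-\underline{y}\|=\Big(\sum_{j=1}^{s'}k_j^2\Big)^{1/2}\leq\sqrt{s'}\,\max_{1\leq j\leq s'}|k_j|\leq\sqrt{s'}\,X_0 .
$$
If $\underline{y}\neq\underline{0}$ this already gives $\ell(\mathcal{L},\underline{y})\leq\sqrt{s'}X_0$, the contradiction we want. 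If $\underline{y}=\underline{0}$, then $\underline{x}=\underline{x}-\underline{y}$; provided $\underline{x}\neq\underline{0}$ this is a nonzero lattice vector of length $\leq\sqrt{s'}X_0$, again contradicting the hypothesis.

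The substantive ingredient here is not this deduction but the preparatory estimate $\mathrm{ord}_p(\Delta_i/\lambda)\geq C_1B-C_3$: propagating the single $P$-adic inequality $\mathrm{ord}_p(\Lambda)\geq C_1B-C_2$ to simultaneous bounds on all coordinates $\Delta_i$ via the $p$-adic logarithm, the choice of $\phi$ with $\K_P=\Q_p(\phi)$, the normalizer $\lambda$, and Evertse's trick, with $C_3$ made explicit — but this is precisely the computation carried out before the statement, which I would invoke. The remaining care needed inside the lemma is with signs, and with the case $\underline{y}=\underline{0}$ and $k_1=\dots=k_{s'}=0$: then $\alpha_2x_2$, and hence also $x_1$, is determined, so $B$ is bounded by a fixed constant, and one falls back on the assumption that $B$ is not too small — exactly the situation de Weger's two-case definition of $\ell$ is built to handle.
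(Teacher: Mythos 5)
The paper does not give its own proof of this lemma: it imports the statement verbatim as Lemma~5 of Smart \cite{Sm1}, remarking only that it is a consequence of Lemmas~3.4, 3.5 and 3.6 of de Weger \cite{deW}. Your reconstruction is exactly the argument those sources supply, and it is correct: the contrapositive step, the passage from $\mathrm{ord}_p(\Delta_i/\lambda)\geq u$ to the integer congruences $\kappa_{0,i}^{(u)}+\sum_{j}k_j\kappa_{j,i}^{(u)}\equiv 0\pmod{p^u}$ (using that each $\kappa_{j,i}-\kappa_{j,i}^{(u)}\in p^u\Z_p$ and the $k_j$ are rational integers), the construction of $\underline{x}=L\,(k_1,\dots,k_{s'},t_0,\dots,t_{n_0-1})^{T}$ so that $\underline{x}-\underline{y}=(k_1,\dots,k_{s'},0,\dots,0)^{T}$, and the resulting bound $\|\underline{x}-\underline{y}\|\leq\sqrt{s'}X_0$ are all exactly the intended deduction. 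Your handling of the degenerate case $\underline{y}=\underline{0}$, $k_1=\dots=k_{s'}=0$ (where $x_2$ and hence $B$ is pinned down, falling under the running ``$B$ not too small'' proviso) is also the right observation. One small remark: the paper's printed definition of $\ell(\mathcal{L},\underline{y})$ has a typo, writing $\min_{\underline{x}\in\mathcal{L}}\|\underline{x}\|$ in the second case where it should read $\min_{\underline{x}\in\mathcal{L}}\|\underline{x}-\underline{y}\|$; you have silently (and correctly) used the intended de~Weger definition.
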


To use this lemma, recall that by \eqref{x1vfelso} we have
$$
|\alpha_1x_1|_v \leq
|\alpha_1|_v\exp\left(\frac{-B}{(s-1)C^*}\right)
$$
for some $v\in S$. Assuming that $v$ is the valuation occurring in
the above argument, this yields
$$
p^{-f_pe_p\mbox{\scriptsize ord}_p(\alpha_1x_1)}\leq
|\alpha_1|_v\exp\left(\frac{-B}{(s-1)C^*}\right),
$$
where $\mbox{Norm}(P)=p^{f_p}$ and $e_p$ is the ramification index
of $P$. Hence a simple calculation gives
$$
\mbox{ord}_p(\Lambda)\geq \left(\frac{1}{f_p e_p\log p (s-1)
C^*}\right)B- \left(\frac{\log |\alpha_1|_v}{f_p e_p\log
p}\right).
$$
Now we can apply Lemma \ref{redpadic} with the above inequality,
using that $B\leq X_0$ holds for some constant $X_0$.

\subsubsection{The complex case} To execute the reduction in the
complex case we use the following result, which is an immediate
consequence of Lemma 1 of Ga\'al and Pohst \cite{GP}.

\begin{lemma}
\label{redabs} Let $\xi_1,\dots,\xi_k$ be non-zero real numbers,
and let $x_1,\dots,x_k$ be integers. Put
$X=\max(|x_1|,\dots,|x_k|)$. Suppose that
$$
|x_1\xi_1+\dots+x_k\xi_k|<C_2\exp(-C_1X) \mbox{ and } X<X_0
$$
hold with some positive constants $C_1,C_2$ and $X_0$. Further,
let $b_1$ be the first vector of an LLL-reduced basis of the
lattice spanned by the columns of the $k\times (k+1)$ type matrix
$$
\begin{pmatrix}
1 & 0 & \dots & 0\\
0 & 1 & \dots & 0\\
\vdots & \vdots & \ddots & \vdots\\
0 & 0 & \dots & 1\\
H\xi_1 & H\xi_2 & \dots & H\xi_k
\end{pmatrix}
$$
where $H$ is some positive constant. Then
$$
|b_1|\geq \sqrt{(k+1)2^{k-1}}\cdot X_0
$$
implies that
$$
X\leq\frac{\log H+\log C_2-\log X_0}{C_1}.
$$
\end{lemma}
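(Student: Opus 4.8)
The plan is to reproduce the classical de~Weger-type reduction argument underlying such lemmas; the statement is in fact presented as a consequence of Lemma~1 of \cite{GP}, so one could simply cite that, but I would give the short self-contained version. First I would attach to the integers $x_1,\dots,x_k$ the column vector $y=(x_1,\dots,x_k,\,H(x_1\xi_1+\dots+x_k\xi_k))^{\mathrm t}$, which is exactly the image under the matrix in the statement of the integer column $(x_1,\dots,x_k)^{\mathrm t}$, and hence lies in the lattice $\mathcal L$ spanned by that matrix's columns. Writing $X_1=(\log H+\log C_2-\log X_0)/C_1$, I would first dispose of the degenerate case $x_1=\dots=x_k=0$ (then $X=0$, and the claim holds since $X_1\ge 0$ in the intended range), and otherwise argue by contradiction, assuming $X>X_1$; then $y\ne 0$.

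The heart of the argument is a one-line upper estimate for $|y|^2=x_1^2+\dots+x_k^2+H^2(x_1\xi_1+\dots+x_k\xi_k)^2$. For the first $k$ coordinates the hypothesis $X<X_0$ gives $x_1^2+\dots+x_k^2\le kX^2<kX_0^2$. For the last coordinate, the linear-form hypothesis together with $X>X_1$ --- equivalently $HC_2\exp(-C_1X)<X_0$ --- gives $H^2(x_1\xi_1+\dots+x_k\xi_k)^2<H^2C_2^2\exp(-2C_1X)<X_0^2$. Adding the two bounds yields $|y|^2<(k+1)X_0^2$.

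Next I would invoke the defining property of an LLL-reduced basis: the lattice $\mathcal L$ has rank $k$ (the columns are independent, since the top $k\times k$ block of the matrix is the identity), and for the first vector $b_1$ of any LLL-reduced basis one has $|b_1|\le 2^{(k-1)/2}|z|$ for every nonzero $z\in\mathcal L$. Taking $z=y$ and combining with the previous estimate gives $|b_1|^2<2^{k-1}(k+1)X_0^2$, that is $|b_1|<\sqrt{(k+1)2^{k-1}}\,X_0$, contradicting the hypothesis $|b_1|\ge\sqrt{(k+1)2^{k-1}}\,X_0$. Hence $X\le X_1$, as required.

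There is no serious obstacle here --- the proof is just a chain of inequalities --- but three points deserve care. One must use the sharp LLL bound $2^{(k-1)/2}$ for a \emph{rank}-$k$ lattice (the ambient space is $\mathbb R^{k+1}$, but the rank is $k$), since this is precisely what produces the factor $\sqrt{2^{k-1}}$. One must track where the factor $\sqrt{k+1}$ comes from: the split of $|y|^2$ into the $k$ coordinates controlled by $X<X_0$ (contributing $<kX_0^2$) and the single coordinate controlled by the linear-form bound (contributing $<X_0^2$). Finally, the all-zero case $x_1=\dots=x_k=0$, where $y=0$ and the LLL inequality is vacuous, must be separated out, and there the conclusion relies on $X_1=(\log H+\log C_2-\log X_0)/C_1$ being nonnegative, which holds whenever $H$ is chosen large enough that $HC_2>X_0$ --- always the case in the reduction method.
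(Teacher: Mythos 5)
Your proof is correct and gives the standard de~Weger-type reduction argument; the paper itself supplies no proof of this lemma, stating only that it is an immediate consequence of Lemma~1 of Ga\'al and Pohst \cite{GP}, and your chain of inequalities (the bound $|y|^2<(k+1)X_0^2$ under the contradiction hypothesis combined with the LLL estimate $|b_1|\le 2^{(k-1)/2}|y|$ for a rank-$k$ lattice) is precisely the content of that cited result. Your caveat about the all-zero case and the sign of $(\log H+\log C_2-\log X_0)/C_1$ is a fair observation about a tacit convention in such reduction lemmas, namely that $H$ is always taken large enough for the bound to be meaningful.
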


Now we briefly explain how to apply this lemma. For this purpose,
let $v\in S$ be an infinite valuation such that $|1-\alpha_2
x_2|_v\leq C_2\exp(-C_1B)$ (see \eqref{bfelso}). Then using the
inequality $|\log x|\leq 2|x-1|$ which holds for $|x-1|<0.795$, we
get
$$
|b_{2,1}\log|\varepsilon_1|_v+
\dots+b_{2,s-1}\log|\varepsilon_{s-1}|_v|
\leq 2C_2\exp(-C_1B).
$$
Then we can apply Lemma \ref{redabs} with the previous inequality,
knowing that $B<X_0$ for some constant $X_0$.

\subsection{Adjusting the method for other choices of the
valuations}

In this subsection we indicate how one can adjust our results for
other choices of the valuations. The motivation is that if $S$
contains only the infinite places (i.e. we are interested in pure
unit equations) then there is an alternative, also natural choice
for the valuations: we simply take the absolute value of the real
conjugates and the absolute value of one from each pair of complex
conjugates of elements $\alpha\in\K$, without squaring in the
complex cases. To be more general, keep the previous notation, and
take arbitrary non-zero rational numbers $r_1,\dots,r_s$. Choose
now valuations $|.|_{v_i'}$ such that
$|\alpha|_{v_i'}:=|\alpha|_{v_i}^{r_i}$ for all $\alpha\in\K$,
where $|\alpha|_{v_i}$ is the previously defined ("standard")
valuation corresponding to $v_i\in S$ $(i=1,\dots,s)$. For
simplicity, we do not introduce new notation but use the previous
one, with the convention that everything is adopted for this new
choice of the valuations. We have the following variants of
Propositions \ref{p1} and \ref{p2}.

\begin{prp}
\label{p3} Let $R_F'$ be a matrix for which \eqref{rf'} is valid.
Then for each $i\in\{1,\dots,s-1\}$ there exists a $u_i\in\R$
such that the $i$-th row of $R_F'$ is of the form $\w_i-u_i\cdot
(1/r_1,\dots,1/r_s)$ with $\w_i=(w_{i,1},\dots,w_{i,s-1},0)$,
where
$$
\begin{pmatrix}
w_{1,1}&\dots&w_{1,s-1}\\
\vdots&\ddots&\vdots\\
w_{s-1,1}&\dots&w_{s-1,s-1}
\end{pmatrix}
=
\begin{pmatrix}
\log |\varepsilon_1|_{v'_1} & \dots & \log
|\varepsilon_{s-1}|_{v'_1}\\
\vdots & \ddots & \vdots \\
\log |\varepsilon_1|_{v'_{s-1}} & \dots & \log
|\varepsilon_{s-1}|_{v'_{s-1}}
\end{pmatrix}
^{-1}.
$$
\end{prp}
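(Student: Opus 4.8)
The plan is to mimic the proof of Proposition \ref{p1} almost verbatim, tracking the effect of the substitution $|\alpha|_{v_i'} = |\alpha|_{v_i}^{r_i}$. The key observation is that the new log-valuation matrix $R_F$ (in the adjusted notation) is obtained from the old one by scaling its $i$-th row by $r_i$; that is, if $D = \mathrm{diag}(r_1,\dots,r_s)$ then $R_F^{\text{new}} = D \cdot R_F^{\text{old}}$. Consequently a matrix $R_F'$ satisfies $R_F' \cdot R_F^{\text{new}} = E_{(s-1)\times(s-1)}$ if and only if $(R_F' D) \cdot R_F^{\text{old}} = E_{(s-1)\times(s-1)}$, so $R_F' D$ has exactly the structure described in Proposition \ref{p1}. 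I would state this reduction at the outset.

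From there the argument is pure bookkeeping. By Proposition \ref{p1} applied to $R_F' D$, its $i$-th row has the form $\mathbf{v}_i - u_i \cdot \mathbf{1}$ where $\mathbf{v}_i = (v_{i,1},\dots,v_{i,s-1},0)$ and the matrix $(v_{i,j})_{1\le i,j\le s-1}$ is the inverse of the top $(s-1)\times(s-1)$ block of $R_F^{\text{old}}$. But the top block of $R_F^{\text{old}}$ is $\mathrm{diag}(1/r_1,\dots,1/r_{s-1})$ times the top block of $R_F^{\text{new}}$, whose inverse is exactly the matrix $(w_{i,j})$ appearing in the statement of Proposition \ref{p3}. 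Unwinding: the $i$-th row of $R_F' D$ equals $\mathbf{v}_i - u_i \mathbf{1}$, and multiplying back by $D^{-1} = \mathrm{diag}(1/r_1,\dots,1/r_s)$ on the right gives that the $i$-th row of $R_F'$ is $\mathbf{v}_i D^{-1} - u_i (1/r_1,\dots,1/r_s)$. It then remains to check that $\mathbf{v}_i D^{-1}$ has the claimed form $\mathbf{w}_i = (w_{i,1},\dots,w_{i,s-1},0)$: the last coordinate is still $0$ since $0/r_s = 0$, and for $j \le s-1$ one has $v_{i,j}/r_j = w_{i,j}$ precisely because $(v_{i,j}) = \big(\mathrm{diag}(1/r_1,\dots,1/r_{s-1})\cdot(\text{top block of }R_F^{\text{new}})\big)^{-1} = (\text{top block of }R_F^{\text{new}})^{-1} \cdot \mathrm{diag}(r_1,\dots,r_{s-1}) = (w_{i,j})\cdot\mathrm{diag}(r_1,\dots,r_{s-1})$, i.e. $v_{i,j} = w_{i,j} r_j$. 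This closes the argument.

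The proof uses only that the $r_i$ are nonzero (so that $D$ is invertible) and the product formula, which still holds in the weighted form $\sum_j \frac{1}{r_j}\log|\varepsilon_i|_{v_j'} = \sum_j \log|\varepsilon_i|_{v_j} = 0$; this is exactly what guarantees that the $u_i$-freedom is along the direction $(1/r_1,\dots,1/r_s)$ rather than $\mathbf{1}$, and it is the only place where the new valuations interact with the kernel of $R_F^{\text{new}}$. I do not expect a genuine obstacle here — the statement is a routine functorial translation of Proposition \ref{p1} under a diagonal change of coordinates. The only point requiring a moment's care is keeping the indices and the placement of $D$ versus $D^{-1}$ straight, in particular verifying that the perturbation direction becomes $(1/r_1,\dots,1/r_s)$ (the kernel direction of the transposed system) rather than $(r_1,\dots,r_s)$; writing out the $1\times s$ row-vector computation $\mathbf{1}\cdot D^{-1} = (1/r_1,\dots,1/r_s)$ explicitly settles it.
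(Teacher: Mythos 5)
Your proof is correct. Where the paper merely says the statement ``can be proved in a similar manner as Proposition \ref{p1}'' using the weighted product formula $\sum_{j=1}^s \frac{1}{r_j}\log|\varepsilon_i|_{v_j'}=0$ --- i.e.\ it indicates a direct re-run of the kernel/rank argument from the proof of Proposition \ref{p1} with the new kernel direction $(1/r_1,\dots,1/r_s)$ in place of $\mathbf 1$ --- you instead reduce \emph{formally} to the already-established Proposition \ref{p1} by conjugating with the diagonal matrix $D=\mathrm{diag}(r_1,\dots,r_s)$. The two routes rest on exactly the same linear-algebraic facts (rank $s-1$, one-dimensional left kernel, the product formula pinning down the kernel direction), so the content is the same; but your packaging has a small advantage: the transformation $R_F^{\mathrm{new}}=D\,R_F^{\mathrm{old}}$ makes the change of kernel direction $\mathbf 1 \mapsto \mathbf 1 D^{-1}=(1/r_1,\dots,1/r_s)$ and the rescaling $v_{i,j}=w_{i,j}r_j$ drop out mechanically, rather than requiring one to re-identify the kernel from the weighted product formula. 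One could even observe that the weighted product formula you invoke at the end is itself equivalent to $R_F^{\mathrm{old}}$ having $\mathbf 1$ in its left kernel, so in the reduction route it is not logically needed as a separate input --- it is already packaged inside Proposition \ref{p1}.
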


Note that using the assertion $\sum\limits_{j=1}^s
\frac{1}{r_j}\log|\varepsilon_i|_{v_j'}=0$ $(i=1,\dots,s-1)$, the
statement can be proved in a similar manner as Proposition
\ref{p1}. We suppress the details.

To formulate our last statement, write $r_i=q_i/p_i$ with
$p_i,q_i\in\Z$, $q_i>0$, $\gcd(p_i,q_i)=1$ and put $t_i=q/r_i$
where $q=\gcd(q_1,\dots,q_s)$. Finally, let $\w'_i$ be the
$\left(\sum\limits_{i=1}^s |t_i|\right)$-tuple such that the first
$|t_1|$ entries of $\w'_i$ equal $r_1w_{i,1}$, the next $|t_2|$
entries of $\w'_i$ equal $r_2w_{i,2}$, etc. $(i=1,\dots,s)$, with
the convention $w_{i,s}=0$.

\begin{prp}
\label{p4}
Using the above notation, we have
$$
N(F)=\frac{\max\limits_{1\leq i\leq s-1} |\w'_i|_C}{q}.
$$
\end{prp}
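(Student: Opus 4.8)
The plan is to follow the proof of Proposition \ref{p2} almost verbatim, the one extra ingredient being a device that clears the rational weights $1/r_1,\dots,1/r_s$ appearing in Proposition \ref{p3}. First I would apply Proposition \ref{p3}: the $i$-th row of any admissible $R_F'$ must be $\w_i-u_i(1/r_1,\dots,1/r_s)$ with $\w_i=(w_{i,1},\dots,w_{i,s-1},0)$ fixed by $F$, and the only freedom is the choice of $u_1,\dots,u_{s-1}\in\R$; since $u_i$ influences only the $i$-th row, this gives
$$
N(F)=\min_{R_F'}\|R_F'\|=\max_{1\le i\le s-1}\ \min_{u\in\R}\ \sum_{j=1}^{s}\Bigl|w_{i,j}-\frac{u}{r_j}\Bigr|,
$$
with the convention $w_{i,s}=0$. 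So it is enough to prove, for each fixed $i$, that
$$
q\cdot\min_{u\in\R}\sum_{j=1}^{s}\Bigl|w_{i,j}-\frac{u}{r_j}\Bigr|=|\w'_i|_C .
$$

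The heart of the argument is the identity $|t_j|/q=1/|r_j|$, valid since $t_j=q/r_j$ and $q>0$. Using it, for any $c\in\R$,
$$
\sum_{j=1}^{s}|t_j|\,\bigl|r_j w_{i,j}-c\bigr|=\sum_{j=1}^{s}\frac{q}{|r_j|}\,\bigl|r_j w_{i,j}-c\bigr|=q\sum_{j=1}^{s}\Bigl|w_{i,j}-\frac{c}{r_j}\Bigr| .
$$
The left-hand side is exactly $\sum_k|(\w'_i)_k-c|$, the sum of the absolute deviations from the common center $c$ of the entries of $\w'_i$ — which, by construction, is the multiset consisting of $|t_j|$ copies of $r_j w_{i,j}$ for each $j$. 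Just as in the proof of Proposition \ref{p2}, this quantity is minimized over $c\in\R$ by taking $c$ to be the $\lfloor(N+1)/2\rfloor$-th smallest entry of $\w'_i$, where $N=\sum_j|t_j|$ is the length of $\w'_i$, and the minimal value is by definition $|\w'_i|_C$. On the right-hand side, $u=c$ ranges over all of $\R$ as $c$ does, so its minimum is $q$ times $\min_{u\in\R}\sum_j|w_{i,j}-u/r_j|$. Equating the two minima yields the displayed identity, and maximizing over $i$ gives $N(F)=\bigl(\max_{1\le i\le s-1}|\w'_i|_C\bigr)/q$, as claimed.

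I do not foresee a genuine obstacle: once Proposition \ref{p3} turns the problem into the weighted one-dimensional $L^1$-minimization above, the weight-clearing identity reduces it to the unweighted central-norm minimization already settled for Proposition \ref{p2}. The only points needing a line of care are bookkeeping ones — that $q>0$ (so $|q|=q$), that each $t_j$ is a nonzero integer by the choice of $q$ (this is what makes the replication yield an honest tuple $\w'_i$), and that the sign of $t_j$ is immaterial, since we take $|t_j|$ copies and the summand $|r_j w_{i,j}-c|$ already carries an absolute value.
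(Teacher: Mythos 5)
Your proof is correct and follows essentially the same route the paper indicates: the identity $\sum_{j}|w_{i,j}-\tfrac{1}{r_j}u_i|=\tfrac{1}{q}\sum_{j}|t_j|\,|r_jw_{i,j}-u_i|$ (which you derive from $|t_j|/q=1/|r_j|$) is exactly the observation the paper states, and unrolling the weights $|t_j|$ into the replicated tuple $\w'_i$ reduces the row minimization to the same median/central-norm argument used for Proposition \ref{p2}. Your closing bookkeeping remarks (that $q>0$, that the $t_j$ must be integers for $\w'_i$ to make sense, and that the sign of $t_j$ is irrelevant) are the right points to check.
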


Observe that for $i=1,\dots,s$ we have $\sum\limits_{j=1}^{s}
|w_{i,j}-\frac{1}{r_j} u_i|= \frac{1}{q}\sum\limits_{j=1}^{s}
|t_j||r_jw_{i,j}-u_i|$. Using this assertion, Proposition \ref{p4}
can be verified similarly to Proposition \ref{p2}. We omit the
details once again.

Finally, we note that as one can easily see, using Propositions
\ref{p3} and \ref{p4} in place of Propositions \ref{p1} and
\ref{p2}, respectively, all the arguments of the paper remain
valid also under this general choice of the valuations - after
making the necessary (but rather obvious) alternations.

\section{Acknowledgements}

The author would like to express his special thanks to Attila
Peth\H o for the several discussions and advice, in particular
concerning Section \ref{wildsect} of the paper.

\end{document}